\documentclass[smallextended]{article}

\usepackage{amsmath}
\usepackage{amsfonts}
\usepackage{amssymb}
\usepackage{mathrsfs}
\usepackage{amscd}
\usepackage{pb-diagram}
\usepackage{amsthm,mathtools}
\usepackage{color}
\usepackage[all]{xy}%pour les diagrams
\usepackage{graphicx}
\usepackage{url}
\usepackage{enumerate}
\usepackage[titletoc,title]{appendix}
\usepackage{dsfont}
\usepackage{multirow} %mesclar células da mesma coluna
%\usetikzlibrary{shapes} % nodes in tikz
\numberwithin{equation}{section} 
\usepackage{tikz-cd}
\numberwithin{equation}{section} 
\usepackage{caption}
\DeclareUnicodeCharacter{2212}{-}
\usepackage{tikz}

%%%%%%%%%%%  Pour les subsection dans le texte
\usepackage{titlesec}
\titleformat{\subsection}[runin]{\normalsize\bfseries}{\thesubsection}{5pt}{}
%%%%%%%%%%

\usepackage{tikz}          
\usetikzlibrary{patterns.meta, matrix, arrows, decorations.pathmorphing}
\usetikzlibrary{patterns}

 % transposee pour une grosse matrice!! 
%\setlength{\oddsidemargin}{0pt}         % Marge gauche sur pages impaires
%\setlength{\evensidemargin}{9pt}        % Marge gauche sur pages paires
%\setlength{\textwidth}{460pt}   % Largeur de la zone de texte (17cm

\author{Danuzia Figueir\^edo \\
\small \url{danuzianf@hotmail.com}
\and 
Hale Ayta\c{c} \\
\small \url{aytachale@gmail.com}
\and 
Mathieu Molitor\\
\small \url{pergame.mathieu@gmail.com} \\
\it \small{Instituto de Matem\'{a}tica, Universidade Federal da Bahia}\\
\it \small{Salvador, Brazil}\\ 
}

\title{Exponential families and affine Grassmannians}
\date{}

\bibliographystyle{alpha}

\begin{document}

\theoremstyle{definition}
\newtheorem{lemma}{Lemma}[section]
\newtheorem{definition}[lemma]{Definition}
\newtheorem{proposition}[lemma]{Proposition}
\newtheorem{corollary}[lemma]{Corollary}
\newtheorem{theorem}[lemma]{Theorem}
\newtheorem{remark}[lemma]{Remark}
\newtheorem{example}[lemma]{Example}
\bibliographystyle{alpha}

\maketitle 

%\tableofcontents

\begin{abstract}
	We establish a one-to-one correspondence between the set of minimal exponential families of dimension $n$ 
	defined on a finite sample space $\Omega$ and the affine Grassmannian associated to an appropriate vector space of functions.
\end{abstract}

\section{Introduction}

	In statistical theory, exponential families are parametric families of distributions with a 
	specific form that considerably simplifies mathematical analysis \cite{brown,casella,nielsen}.
	Their rich properties and inherent tractability make them among the most important statistical models, 
	encompassing many commonly used distributions, including the normal, Poisson, binomial, exponential, 
	gamma, beta, and Bernoulli.

	To formally define exponential families in the discrete case, consider a finite sample space 
	$\Omega=\{x_{0}, ..., x_{m}\}$ equipped with the counting measure. 
	A family of probability distribution functions $\{p_{\theta} : \Omega \to \mathbb{R}\}$ indexed by a parameter 
	$\theta\in \mathbb{R}^{n}$ is an $n$-dimensional exponential family if there exist functions 
	$C, F_{1}, ..., F_{n} : \Omega \to \mathbb{R}$ such that	
	$p_{\theta}(x)=\textup{exp}\{C(x)+\theta_{1}F_{1}(x)+...+\theta_{n}F_{n}(x)-\psi(\theta)\}$ 
	for all $\theta\in \mathbb{R}^{n}$ and $x\in \Omega$. The function $\psi:\mathbb{R}^{n}\to \mathbb{R}$, 
	known as the \textit{log-partition function} or \textit{cumulant generating function}, is uniquely determined by 
	the normalization constraint $\sum_{x\in \Omega}p_{\theta}(x)=1$. 

	The functions $C, F_{1}, ..., F_{n}$ completely determine the exponential family. 
	However, this representation in terms of $C, F_{1}, ..., F_{n}$ is not unique; different 
	functions $C', F_{1}', ..., F_{n}'$ may correspond to the same exponential family. 
	This leads to the problem of identifying the equivalence class of functions that define a 
	given exponential family, a problem not systematically addressed in the literature and the focus 
	of this article.

	We approached this problem by considering the action of a 
	Lie group $G_{n}$ of dimension $(n+1)^{2}$ on the set of all $(n+1)$-tuples $(C,F_{1},...,F_{n})$. 
	We show (Proposition \ref{neknkrnkefnk}) that two such tuples 
	determine the same exponential family if and only if they belong to the same $G_{n}$-orbit. 
	Investigating the corresponding orbit space, we prove our main result (Theorem \ref{nekwnkefnkn}):
	the $G_{n}$-action is free and proper, and the orbit space is diffeomorphic 
	to the affine Grassmannian $\textup{Graff}_{n}(C(\Omega)/\mathbb{R})$ 
	of $n$-dimensional affine subspaces of the vector space $C(\Omega)/\mathbb{R}$. Here 
	$C(\Omega)$ is the space of real-valued functions on $\Omega$ and $\mathbb{R}$ is identified 
	with the space of constant functions on $\Omega$. The proof relies on the fact that $G_{n}$ has the structure of a 
	semidirect product, which allows us to perform a reduction by stages \cite{stages}. 
	As a corollary (Corollary \ref{nfeknkernkfn}), we obtain 
	a one-to-one correspondence between the set of $n$-dimensional 
	(minimal) exponential families on $\Omega$ and the affine Grassmannian $\textup{Graff}_{n}(C(\Omega)/\mathbb{R})$. 
	For instance, if the cardinality of $\Omega$ is $n+1$, then $\textup{Graff}_{n}(C(\Omega)/\mathbb{R})$ consists 
	of a single affine subspace (namely $C(\Omega)/\mathbb{R}$), implying the existence of only one $n$-dimensional (minimal) 
	exponential family on $\Omega$. 

	The paper begins with a review of the essential background, including affine geometry, reduction by stages, 
	and the Stiefel and affine Grassmannian manifolds. Section \ref{neknkrnkefnkw} introduces the group $G_{n}$ and discusses 
	its role in representing exponential families with functions $C,F_{1},...,F_{n}$.
	Section \ref{nceknkenekn} analyzes the associated orbit space and presents our main result.

\section{Preliminaries}
\subsection{Affine sets and maps.}

	Throughout the paper, $\mathbb{R}^{m}$ is endowed with the Euclidean inner product 
	$\langle x,y\rangle=x_{1}y_{1}+...+x_{m}y_{m}$. We use $\textup{M}(m\times n,\mathbb{R})$, 
	$\textup{M}(m,\mathbb{R})$ and $\textup{GL}(m,\mathbb{R})$ to denote the set of real 
	$m\times n$ matrices, the set of real $m\times m$ 
	matrices and the group of invertible $m\times m$ real matrices, respectively. 

	Let $V$ be a $m$-dimensional real vector space, with basis $\{e_{1},...,e_{m}\}$. 
	If $v\in V$, we write $v=v_{1}e_{1}+...+v_{m}e_{m}$, where $v_{i}\in \mathbb{R}$ for all $i=1,...,m$. 
	We use $\textup{L}(V,W)$ to denote the space of linear maps from $V$ to a vector space $W$, and 
	$\textup{GL}(V)$ to denote the group of invertible linear maps from $V$ to itself.
	
	Let $S$ be a nonempty subset of $V$. The \textit{affine hull} of $S$, denoted by $\textup{aff}(S)$, 
	is the set of all the vectors in $V$ of the form $\lambda_{1}v_{1}+...+\lambda_{s}v_{s}$ such that $v_{i}\in S$ and 
	$\lambda_{1}+...+\lambda_{s}=1$. If $\textup{aff}(S)=S$, we say that $S$ is \textit{affine}. Equivalently, $S\subset V$ 
	is affine if $(1-t)x+ty\in S$ for every $x$ and $y$ in $S$ and $t\in \mathbb{R}$. 
	If $S$ is affine, then the set $\textup{dir}(S):=\{x-y\,\,\,\vert\,\,\,x\in S, y\in S \}$ is a linear subspace of $V$, called 
	the \textit{direction} of $S$. The \textit{dimension} of an affine set is by definition the dimension of its 
	direction. If $S$ is affine, then for every $s\in S$, we have 
	$S=s+\textup{dir}(S):=\{s+u\,\,\vert\,\,u\in \textup{dir}(S)\}$. 

	A map $f$ from $V$ to a vector space $W$ is said to be \textit{affine} if 
	$f(tv_{1}+(1-t)v_{2})=tf(v_{1})+(1-t)f(v_{2})$ for every $v_{1}$ and $v_{2}$ in $V$ and $t\in \mathbb{R}$. 
	Equivalently, a map $f:V\to W$ is affine if there exist $A\in \textup{L}(V,W)$ and $b\in W$ such that 
	$f(v)=A(v)+b$ for all $v\in V$, in which case $A$ and $b$ are unique. 
	We use $\textup{Aff(V,W)}$ to denote the linear space of affine maps from 
	$V$ to $W$, and $\textup{Aff}(V)$ to denote the group invertible affine maps from $V$ to itself. 
	Warning: $\textup{Aff}(V,V)\neq \textup{Aff}(V)$. 

	Let $G$ be a group and $\rho:G\to \textup{GL}(V)$ a group homomorphism. The \textit{semidirect product of $G$ and $V$},
	denoted by $G\ltimes_{\rho} V$, is the Cartesian product $G\times V$ endowed with the product
	\begin{eqnarray*}
		(g,v)\cdot (g',v')=(gg',v+\rho(g)(v')).
	\end{eqnarray*}
	This product turns $G\ltimes_{\rho} V$ into a group. Sometimes we shall leave out the subscript $\rho$ in the notation 
	$G\ltimes_{\rho} V$ if $\rho$ is understood from the context. If $G$ is a Lie group and $\rho$ is a Lie group homomorphism, 
	then $G\ltimes_{\rho} V$ is also a Lie group. For example, the \textit{affine group} $\textup{Aff}(V)$ is 
	by definition the semidirect product $\textup{GL}(V)\ltimes V$, where the group homomorphism 
	is the identity map of $\textup{GL}(V)$.

	If $\rho:G\to \textup{GL}(V)$ is an \textit{anti-homomorphism} of groups, i.e., if 
	$\rho(gg')=\rho(g')\rho(g)$ for all $g$ and $g'$ in $G$, 
	then the semidirect product $G\ltimes_{\rho} V$ is defined similarly, except that the product rule is now 
	\begin{eqnarray}\label{nvrekwnekfnk}
		(g,v)\cdot (g',v')=(gg',v'+\rho(g')(v)).
	\end{eqnarray}

\subsection{Reduction by stages.} In this section, we discuss a simplified version of reduction by stages 
	theory \cite{stages}. The basic idea is that if a ``large group" $G$ acts on a manifold $M$, and if $H$ is a normal subgroup of $G$, 
	then the orbit space $M/G$ can be computed in two stages: first by quotienting by the ``small group" $H$, and then by 
	$G/H$. In other words, $M/G\cong (M/H)/(G/H)$. This procedure often simplifies calculations, 
	but also gives additional information about the fiber structure of the objects involved. 

	We begin by recalling some simple facts of group action theory. Let $G$ be a Lie group that acts smoothly on 
	the left on a manifold $M$. We use $Gx$ to denote 
	the orbit of $x\in M$. The corresponding orbit space is denoted by $M/G$. If we regard orbit spaces as quotient spaces, then 
	we write the equivalence class of $x\in M$ as $[x]_{G}$. 

	The action is said to be \textit{free} if for every $x\in M$, the stabilizer group $G_{x}=\{g\in G\,\,\vert\,\,g\cdot x=x\}$ 
	is trivial. The action is \textit{proper} if the following condition is satisfied: if $(x_{n})_{n\in \mathbb{N}}$ is a 
	convergent sequence in $M$ and if $(g_{n}\cdot x_{n})_{n\in \mathbb{N}}$ converges in $M$, 
	then $(g_{n})_{n\in \mathbb{N}}$ has a convergent subsequence in $G$. If $G$ acts freely and properly on $M$, 
	then the orbit space $M/G$ is naturally a manifold of dimension $\textup{dim}(M)-\textup{dim}(G)$ for which 
	the quotient map $M\to M/G$ is a surjective submersion. 
	
	Let $H$ be a closed Lie subgroup of $G$. Then $H$ acts freely and properly on the left on $G$ by 
	left multiplication, and thus the quotient space $G/H$ is a manifold, called \textit{right coset space}. 
	If in addition $H$ is \textit{normal} 
	in $G$, that is, if $gHg^{-1}=H$ for all $g\in G$, then the formula $(Hg)(Hg')=Hgg'$ defines a 
	multiplication operator on $G/H$ that turns $G/H$ into a Lie group, and the quotient map 
	$G\to G/H$ is a Lie group homomorphims. 

	The next result is a particular case of Poisson reduction by stages (see \cite[Proposition 5.3.1]{stages}). 

\begin{proposition}[\textbf{Reduction by stages}]\label{nfekjkfekkjkfejk}
	Let $G$ be a Lie group that acts freely and properly on the left on a manifold $M$, and let 
	$H$ be a closed Lie subgroup of $G$ that is normal in $G$. %Let $K:=G/H$. 
	\begin{enumerate}[(a)]
	\item $H$ acts freely and properly on $M$ by restriction of the $G$-action.
	\item $G/H$ acts freely and properly on the left on 
		$M/H$ by $[g]_{H}\cdot [x]_{H}=[g\cdot x]_{H}$.
	\item The map $M/G\to (M/H)/(G/H)$, $[x]_{G}\,\mapsto \big[[x]_{H}\big]_{G/H}$ 
		is a diffeomorphism. 
	\end{enumerate}
\end{proposition}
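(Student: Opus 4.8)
The plan is to treat the three parts in order, since each relies on the quotient-manifold structure produced by the previous one. Part (a) is the easiest: freeness of the restricted $H$-action is immediate, because any $h\in H$ fixing a point of $M$ fixes it as an element of $G$, and the $G$-action is free, so $h=e$. For properness, suppose $(x_{n})$ converges and $(h_{n}\cdot x_{n})$ converges with $h_{n}\in H\subset G$; properness of the $G$-action yields a subsequence $h_{n_{j}}\to g\in G$, and since $H$ is closed in $G$ the limit $g$ lies in $H$. Hence $H$ acts freely and properly, so $M/H$ is a manifold and the quotient map $\pi_{H}\colon M\to M/H$ is a surjective submersion.

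For part (b) I would first verify that $[g]_{H}\cdot[x]_{H}=[g\cdot x]_{H}$ is well-defined, which is precisely where normality of $H$ is used: replacing $g$ by a representative $gh_{1}$ and $x$ by $h_{2}\cdot x$ gives $gh_{1}h_{2}\cdot x=(gh_{1}h_{2}g^{-1})\cdot(g\cdot x)$, and $gh_{1}h_{2}g^{-1}\in H$ by normality, so the $H$-orbit of $g\cdot x$ is unchanged. Smoothness follows from the universal property of surjective submersions: the product map $G\times M\to (G/H)\times(M/H)$ is a surjective submersion, and $(g,x)\mapsto[g\cdot x]_{H}$ is smooth and constant on its fibers, so it descends to a smooth action map. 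Freeness is direct: if $[g\cdot x]_{H}=[x]_{H}$ then $g\cdot x=h\cdot x$ for some $h\in H$, whence $h^{-1}g=e$ by freeness of the $G$-action, forcing $[g]_{H}=[e]_{H}$.

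The main obstacle is properness of the $G/H$-action, and the key tool is that the surjective submersion $\pi_{H}$ admits smooth local sections. Given convergent sequences $[x_{n}]_{H}\to[x_{*}]_{H}$ and $[g_{n}\cdot x_{n}]_{H}\to[y_{*}]_{H}$ in $M/H$, I would lift both through local sections to sequences $x_{n}'\to x_{*}$ and $y_{n}'\to y_{*}$ in $M$ with $[x_{n}']_{H}=[x_{n}]_{H}$ and $[y_{n}']_{H}=[g_{n}\cdot x_{n}]_{H}$. Then $g_{n}\cdot x_{n}'$ and $y_{n}'$ lie in the same $H$-orbit, so $y_{n}'=k_{n}\cdot(g_{n}\cdot x_{n}')$ for some $k_{n}\in H$. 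Setting $\tilde{g}_{n}=k_{n}g_{n}$, both $x_{n}'\to x_{*}$ and $\tilde{g}_{n}\cdot x_{n}'=y_{n}'\to y_{*}$ converge, so properness of the $G$-action produces a subsequence $\tilde{g}_{n_{j}}\to g_{*}\in G$. Since $k_{n}\in H$, the elements $\tilde{g}_{n}$ and $g_{n}$ represent the same coset, so $[g_{n_{j}}]_{H}\to[g_{*}]_{H}$ in $G/H$, giving the required convergent subsequence. Thus the $G/H$-action is free and proper and $(M/H)/(G/H)$ is a manifold.

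Finally, for part (c) the well-definedness, injectivity, and surjectivity of $\Phi([x]_{G})=\big[[x]_{H}\big]_{G/H}$ are routine orbit-chasing, injectivity using that $H\subset G$ so that $[g\cdot x]_{H}=[x']_{H}$ forces $x'$ and $x$ into the same $G$-orbit. For the diffeomorphism claim I would observe that the composite $M\to M/H\to(M/H)/(G/H)$ is a surjective submersion that is constant on $G$-orbits, hence descends through $\pi_{G}\colon M\to M/G$ to a smooth map, which is exactly $\Phi$. Because this composite is a submersion and equals $\Phi\circ\pi_{G}$ with $\pi_{G}$ a surjective submersion, $\Phi$ is itself a submersion. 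The dimension count $\dim(M/G)=\dim M-\dim G=(\dim M-\dim H)-(\dim G-\dim H)=\dim\big((M/H)/(G/H)\big)$ then shows that the bijective submersion $\Phi$ is a local diffeomorphism, hence a diffeomorphism.
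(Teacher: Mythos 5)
Your proof is correct, but it takes a genuinely different route from the paper: the paper does not prove this proposition at all, it simply cites it as a special case of Poisson reduction by stages (Proposition 5.3.1 of the reference \cite{stages}). You instead give a complete, self-contained argument using only basic quotient-manifold theory: freeness and closedness of $H$ for part (a); the universal property of surjective submersions to get well-definedness and smoothness of the $G/H$-action, plus a lifting argument via local sections of $\pi_{H}\colon M\to M/H$ to transfer properness from $G$ to $G/H$, for part (b); and for part (c) the observation that $M\to M/H\to (M/H)/(G/H)$ descends through $\pi_{G}$ to a bijective submersion, which the dimension count $\dim M-\dim G=(\dim M-\dim H)-(\dim G-\dim H)$ upgrades to a diffeomorphism. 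Each step checks out; the only places needing care are ones you handle correctly --- the limit of the subsequence in (a) lies in $H$ because a closed Lie subgroup is embedded, the coset identity $[k_{n}g_{n}]_{H}=[g_{n}]_{H}$ in the properness argument uses normality (or right cosets), and the well-definedness computation $(gh_{1})\cdot(h_{2}\cdot x)=(gh_{1}h_{2}g^{-1})\cdot(g\cdot x)$ is exactly where normality enters. What your approach buys is self-containedness and elementary hypotheses: the reader needs nothing beyond the quotient manifold theorem and the descent property of submersions. What the paper's citation buys is brevity and context: the result is placed inside the general reduction-by-stages framework, which is the machinery the paper is advertising, though that framework (built for Poisson and symplectic reduction) is considerably heavier than what this purely smooth statement requires. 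Your argument could serve as a proof of the proposition exactly as stated, independent of the cited literature.
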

	
	As a typical application of this proposition, consider a Lie group $\widetilde{G}=G\ltimes V$ that is a 
	semidirect product of a Lie group $G$ and a vector space $V$. 
	Then $V\cong \{e\}\times V$ is a normal closed subgroup of $\widetilde{G}$, and the coset space $\widetilde{G}/V$, 
	regarded as a Lie group, is isomorphic to $G$. Thus, if $\widetilde{G}$ acts freely and properly on a manifold $M$, 
	then $V$ and $G$ act freely and properly on $M$ and $M/V$, respectively, and there is a diffeomorphism 
	$M/\widetilde{G}\cong (M/V)/G$.

\subsection{Stiefel manifolds and affine Grassmannians.} \label{nvenwknrfken}
	This section briefly discusses the manifold structure of the affine Grassmannian using a Lie group-theoretical 
	approach involving the Stiefel manifold. For a more systematic treatment, see \cite{lim}.

	Let $V$ be a finite dimensional real vector space. 
	Recall that a \textit{$n$-frame} in $V$ is a linearly independent ordered set of vectors $(v_{1},...,v_{n})$. The set 
	of all $n$-frames in $V$, denoted by $\textup{Stief}_{n}(V)$, is called \textit{Stiefel manifold}. 
	Since $\textup{Stief}_{n}(V)$ is an open subset of the Cartesian product $V^{n}=V\times ...\times V$ ($n$-times), it is 
	naturally a manifold of dimension $n\,\textup{dim}(V)$. 

	The set of all $n$-dimensional affine subsets of $V$, denoted by $\textup{Graff}_{n}(V)$, is 
	called the \textit{affine Grassmannian}. There is a natural projection 
	\begin{eqnarray*}	
	\pi:\,\,\,
		\left\lbrace
		\begin{array}{rcll}
		V\times \textup{Stief}_{n}(V) & \to & \textup{Graff}_{n}(V), \\[0.5em]
		(v_{0},(v_{1},...,v_{n}))	 &\mapsto & v_{0}+\textup{span}\{v_{1},...,v_{n}\}.
		\end{array}
		\right.
	\end{eqnarray*}

	Let $\textup{Aff}(n,\mathbb{R})^{\dag}=\textup{GL}(n,\mathbb{R})\ltimes_{\rho}\mathbb{R}^{n}$ be the semidirect product 
	associated to the \textit{anti-homomorphism} $\rho:\textup{GL}(n,\mathbb{R})\to \textup{GL}(n,\mathbb{R})$, $A\mapsto A^{T}$ (transpose). 
	The product of two elements $(A,x)$ and $(A',x')$ in $\textup{Aff}(\mathbb{R}^{n})^{\dag}$ is given by (see \eqref{nvrekwnekfnk}):
	\begin{eqnarray*}
		(A',x')\cdot (A,x)= (A'A, x+A^{T}x'). 
	\end{eqnarray*}

	As a matter of notation, if $F=(v_{1},...,v_{n})$ is an $n$-frame in $V$, $x=(x_{1},...,x_{n})\in \mathbb{R}^{n}$ and
	$A=(A_{ij})\in \textup{GL}(n,\mathbb{R})$, then we write 
	\begin{eqnarray*}
		\langle x,F\rangle=x_{1}v_{1}+...+x_{n}v_{n}
	\end{eqnarray*}
	and 
	\begin{eqnarray*}
		AF=
		\left(
		\begin{smallmatrix}
		A_{11} & \cdots & A_{1n} \\
		\vdots & \ddots & \vdots \\
		A_{n1} & \cdots & A_{nn}
		\end{smallmatrix}
		\right)
		\left(
		\begin{smallmatrix}
		v_{1}	 \\
		\vdots   \\
		v_{n}
		\end{smallmatrix}
		\right)
		= (A_{11}v_{1}+...+A_{1n}v_{n},...,A_{n1}v_{1}+...+A_{nn}v_{n}). 
	\end{eqnarray*}
	Note that $\langle x,F\rangle\in V$ and $AF\in \textup{Stief}_{n}(V)$. 

			The group $\textup{Aff}(n,\mathbb{R})^{\dag}$ acts on the left on $V\times \textup{Stief}_{n}(V)$ by 
	\begin{eqnarray}\label{nvkenrknknek}
		(A,x)\cdot  (v_{0},F)=(v_{0}+\langle x,F\rangle, AF).
	\end{eqnarray}

	It is immediate that $\pi$ is invariant under the action of $\textup{Aff}(n,\mathbb{R})^{\dag}$ 
	on $V\times \textup{Stief}_{n}(V)$ and hence it descends to a map 
	\begin{eqnarray}\label{nvkwnfekrnkefn}
		\big(V\times \textup{Stief}_{n}(V)\big)/\textup{Aff}(n,\mathbb{R})^{\dag} \to  \textup{Graff}_{n}(V),
	\end{eqnarray}
	which is readily verified to be a bijection. Together with the next lemma, this bijection allows us to endow 
	$\textup{Graff}_{n}(V)$ with a manifold structure.
	
\begin{lemma}
	The action of $\textup{Aff}(n,\mathbb{R})^{\dag}$ on $V\times \textup{Stief}_{n}(V)$ is free and proper.
\end{lemma}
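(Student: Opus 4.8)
The plan is to dispose of freeness by a one-line stabilizer computation, and then to establish properness by recovering the acting group element $(A,x)$ explicitly as a continuous function of the data, exploiting the fact that every frame has full rank.

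First I would fix a basis to identify $V\cong\mathbb{R}^{m}$ and encode an $n$-frame $F=(v_{1},\dots,v_{n})$ as the $m\times n$ matrix $[F]$ whose columns are $v_{1},\dots,v_{n}$. Since $F$ is a frame, $[F]$ has rank $n$, so $[F]^{T}[F]\in\textup{GL}(n,\mathbb{R})$; a short check shows that in these coordinates $[AF]=[F]A^{T}$ and $\langle x,F\rangle=[F]x$. For freeness I would suppose $(A,x)$ fixes $(v_{0},F)$: the relation $AF=F$ reads $\sum_{j}A_{ij}v_{j}=v_{i}$ for each $i$, and linear independence of the $v_{j}$ forces $A=I_{n}$; substituting back, $\langle x,F\rangle=0$ forces $x=0$. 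Hence every stabilizer is trivial.

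For properness, suppose $(v_{0}^{(k)},F^{(k)})\to(v_{0},F)$ and that the images $(w_{0}^{(k)},G^{(k)}):=(A_{k},x_{k})\cdot(v_{0}^{(k)},F^{(k)})$ converge to some $(w_{0},G)$. The idea is to invert the two defining relations using the full-rank left inverse $([F]^{T}[F])^{-1}[F]^{T}$. From $G^{(k)}=A_{k}F^{(k)}$, that is $[G^{(k)}]=[F^{(k)}]A_{k}^{T}$, I obtain $A_{k}^{T}=([F^{(k)}]^{T}[F^{(k)}])^{-1}[F^{(k)}]^{T}[G^{(k)}]$, and from $w_{0}^{(k)}=v_{0}^{(k)}+[F^{(k)}]x_{k}$ I obtain $x_{k}=([F^{(k)}]^{T}[F^{(k)}])^{-1}[F^{(k)}]^{T}(w_{0}^{(k)}-v_{0}^{(k)})$. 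Because $[F^{(k)}]\to[F]$ with $[F]$ of rank $n$, both right-hand sides converge; hence $A_{k}$ and $x_{k}$ converge, with the limit of $A_{k}$ invertible since $G$ is itself a frame. This in fact shows that the entire sequence $(A_{k},x_{k})$ converges, which is stronger than the convergent-subsequence requirement of properness.

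The only real (and mild) obstacle is this inversion step: since frame vectors live in the possibly larger space $V$, one cannot literally invert $[F^{(k)}]$, so I would rely on the Moore--Penrose left inverse $([F]^{T}[F])^{-1}[F]^{T}$, whose continuity in $F$ on the open set of frames is exactly what upgrades the pointwise solvability for $(A_{k},x_{k})$ to the limit statement required for properness.
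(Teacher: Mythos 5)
Your proof is correct, but it takes a genuinely different route from the paper. The paper does not prove this lemma where it is stated: it defers it to Section 4, where it first proves directly that the larger group $G_{n}$ acts freely and properly on $C(\Omega)\times \textup{Stief}_{n}(C(\Omega))^{\times}$ (using Lemma \ref{nekwnknfekwnkn} to extract an invertible matrix of differences $F_{a}(x_{i_{b}})-F_{a}(x_{i_{0}})$ and solve for the group element), and then invokes reduction by stages (Proposition \ref{nfekjkfekkjkfejk}(b)): since $\mathbb{R}^{n}\oplus\mathbb{R}$ is normal in $G_{n}$ and the first-stage quotient is identified with $C(\Omega)/\mathbb{R}\times\textup{Stief}_{n}(C(\Omega)/\mathbb{R})$, the quotient group $G_{n}/(\mathbb{R}^{n}\oplus\mathbb{R})=\textup{Aff}(\mathbb{R}^{n})^{\dag}$ inherits a free and proper action, which yields the lemma for $V\cong C(\Omega)/\mathbb{R}$ (and hence, up to linear isomorphism, for any finite-dimensional $V$). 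Your argument instead proves the statement directly for an arbitrary $V$: you encode frames as full-rank $m\times n$ matrices, dispose of freeness by linear independence, and for properness you recover $(A_{k},x_{k})$ as an explicit continuous function of the data via the left inverse $([F]^{T}[F])^{-1}[F]^{T}$, concluding that the whole sequence converges to a group element (invertibility of the limit following from the limit $G$ being a frame). What your approach buys is self-containedness: the lemma is proved at the point of use, for general $V$, with no forward reference to Section 4 and no need to check that the reduction-by-stages chain avoids circular dependence on Proposition \ref{nvkednknkefn}. What the paper's approach buys is economy and thematic unity: a single free-and-properness computation (the one for $G_{n}$, which is needed anyway for Theorem \ref{nekwnkefnkn}) serves double duty, and the lemma becomes an illustration of the reduction-by-stages machinery that is the paper's central tool. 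Your use of the Gram matrix where the paper selects an invertible submatrix of differences is a cosmetic difference; both devices serve to invert the frame relation continuously.
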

\begin{proof}
	This will be proved in the Section \ref{nceknkenekn}, as an application of the reduction by stages procedure.
\end{proof}

	Consequently, $\big(V\times \textup{Stief}_{n}(V)\big)/\textup{Aff}(n,\mathbb{R})^{\dag}$  is a manifold 
	of dimension $n+n\textup{dim}(V)-(n^{2}+n)=(n+1)(\textup{dim}(V)-n)$. As this quotient space is in bijection with 
	$\textup{Graff}_{n}(V)$, we obtain:
\begin{proposition}\label{nvkednknkefn}
	There exists a unique manifold structure on $\textup{Graff}_{n}(V)$ of dimension $(n+1)(\textup{dim}(V)-n)$ 
	such that \eqref{nvkwnfekrnkefn} is a diffeomorphism. 
\end{proposition}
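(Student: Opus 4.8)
The plan is to transport the smooth structure along the bijection \eqref{nvkwnfekrnkefn}. Write $M=\big(V\times \textup{Stief}_{n}(V)\big)/\textup{Aff}(n,\mathbb{R})^{\dag}$ and let $\phi:M\to \textup{Graff}_{n}(V)$ denote the bijection \eqref{nvkwnfekrnkefn}. By the preceding lemma the action of $\textup{Aff}(n,\mathbb{R})^{\dag}$ on $V\times \textup{Stief}_{n}(V)$ is free and proper, so $M$ is a smooth manifold and, as computed above, $\textup{dim}(M)=(n+1)(\textup{dim}(V)-n)$. The entire statement then reduces to the elementary fact that a bijection from a manifold onto a set induces a unique compatible smooth structure on that set.

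For existence, I would push the structure of $M$ forward through $\phi$. Declare a subset $U\subseteq \textup{Graff}_{n}(V)$ to be open precisely when $\phi^{-1}(U)$ is open in $M$; since $\phi$ is a bijection this is a topology and $\phi$ becomes a homeomorphism, so $\textup{Graff}_{n}(V)$ inherits the Hausdorff and second-countability properties from $M$. For the smooth atlas, take any atlas $\{(W_\alpha,\psi_\alpha)\}$ of $M$ and set charts $\big(\phi(W_\alpha),\,\psi_\alpha\circ \phi^{-1}\big)$ on $\textup{Graff}_{n}(V)$. The transition map between two such charts is $\psi_\beta\circ\phi^{-1}\circ(\psi_\alpha\circ\phi^{-1})^{-1}=\psi_\beta\circ\psi_\alpha^{-1}$, which is smooth because $\{(W_\alpha,\psi_\alpha)\}$ is a smooth atlas. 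With respect to this structure $\phi$ is expressed by the identity in local coordinates, hence is a diffeomorphism, and $\textup{Graff}_{n}(V)$ acquires the asserted dimension.

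For uniqueness, suppose two smooth structures on $\textup{Graff}_{n}(V)$, say $\mathcal{S}_1$ and $\mathcal{S}_2$, both make \eqref{nvkwnfekrnkefn} a diffeomorphism. Then $\phi:M\to (\textup{Graff}_{n}(V),\mathcal{S}_1)$ and $\phi:M\to (\textup{Graff}_{n}(V),\mathcal{S}_2)$ are both diffeomorphisms, so the identity map $(\textup{Graff}_{n}(V),\mathcal{S}_1)\to (\textup{Graff}_{n}(V),\mathcal{S}_2)$, which factors as $\phi\circ\phi^{-1}$, is a composition of diffeomorphisms and hence itself a diffeomorphism. Therefore the two structures coincide.

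The transport argument is entirely formal; the only substantive input is that the $\textup{Aff}(n,\mathbb{R})^{\dag}$-action is free and proper, which guarantees that $M$ is a manifold of the stated dimension. Thus the genuine obstacle lies not in the formal transport carried out here but in the deferred proof of the preceding lemma in Section \ref{nceknkenekn}, where freeness and properness of the action must be established via the reduction-by-stages procedure.
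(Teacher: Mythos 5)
Your proposal is correct and follows essentially the same route as the paper: the paper also obtains the manifold structure by observing that the free and proper $\textup{Aff}(n,\mathbb{R})^{\dag}$-action makes $\big(V\times \textup{Stief}_{n}(V)\big)/\textup{Aff}(n,\mathbb{R})^{\dag}$ a manifold of dimension $(n+1)(\textup{dim}(V)-n)$ and then transporting this structure to $\textup{Graff}_{n}(V)$ through the bijection \eqref{nvkwnfekrnkefn}. You simply make explicit the standard transport-of-structure and uniqueness arguments that the paper treats as immediate.
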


\begin{remark}
	In this section, we have used the \textit{left} action of $\textup{Aff}(n,\mathbb{R})^{\dag}$ on $V\times \textup{Stief}_{n}(V)$ 
	to endow $\textup{Graff}_{n}(V)$ with a manifold structure. Alternatively, one could use the \textit{right} 
	action of the standard affine group $\textup{Aff}(n,\mathbb{R})$ 
	on $V\times \textup{Stief}_{n}(V)$ given by $(v_{0},F)\cdot (A,x)=(v_{0}+\langle x,F\rangle, A^{T}F)$.
\end{remark}

\section{Representations of exponential families}\label{neknkrnkefnkw}

	In this section, we introduce a Lie group $G_{n}$ and discuss its role in representing exponential families 
	via functions $C, F_{1}, ..., F_{n}$.  For notational convenience, $C(X,Y)$ denotes the set of maps from $X$ to $Y$; 
	we write $C(X)$ when $Y = \mathbb{R}$.  For $f\in C(X, \mathbb{R}^{n})$, we use the component notation 
	$f = (f_{1}, ..., f_{n})$, where $f_{i} \in C(X)$. The space $\mathbb{R}^{n}$ 
	is endowed with the standard Euclidean inner product $\langle u,v\rangle=u_{1}v_{1}+...+u_{n}v_{n}$.  \\

	Let $\Omega=\{x_{0},...,x_{m}\}$ be a finite set. Given $C\in C(\Omega)$ and $F\in C(\Omega,\mathbb{R}^{n})$, define 
	a function $p_{C,F}:\Omega\times \mathbb{R}^{n}\to \mathbb{R}$ by 
	\begin{eqnarray}\label{nvdknkfnknk}
		p_{C,F}(x;\theta)=\textup{exp}\{C(x)+\langle \theta,F(x)\rangle-\psi_{C,F}(\theta)\}, 
	\end{eqnarray}
	where $\psi_{C,F}:\mathbb{R}^{n}\to \mathbb{R}$ is uniquely determined by the condition $\sum_{x\in \Omega}p_{C,F}(x;\theta)=1$ 
	for all $\theta\in \mathbb{R}^{n}$. It is immediate that 
	\begin{eqnarray*}
		\psi_{C,F}(\theta)=\ln\bigg(\sum_{x\in \Omega}\textup{exp}(C(x)+\langle \theta,F(x)\rangle\bigg). 
	\end{eqnarray*}

	For a fixed $\theta\in \mathbb{R}^{n}$, the function 
	$p_{C,F}(.;\theta):\Omega\to \mathbb{R}$ is a probability density function on $\Omega$. 
	The collection of all such probability density functions, indexed by $\theta\in \mathbb{R}^{n}$, forms 
	a set that we will denote by $\mathcal{E}_{C,F}$. Thus 
	\begin{eqnarray*}
		\mathcal{E}_{C,F}=\{p_{C,F}(.;\theta)\in C(\Omega)\,\,\vert\,\,\theta\in \mathbb{R}^{n}\}.
	\end{eqnarray*}

\begin{definition}\label{def:5.3} 
	A set of functions $\mathcal{E}\subset C(\Omega)$ is said to be an 
	\textit{exponential family} of dimension $n$ if there exist $C\in C(\Omega)$ and 
	$F\in C(\Omega,\mathbb{R}^{n})$ such that $\mathcal{E}=\mathcal{E}_{C,F}$. 
\end{definition}

	Since exponential families are defined as sets of functions, two exponential families $\mathcal{E}$ and $\mathcal{E}'$ 
	of dimension $n$ defined on $\Omega$ are equal if and only if they consist of precisely the same functions 
	within the space $C(\Omega)$. 

	An exponential family $\mathcal{E}\subset C(\Omega)$ of dimension $n$ is said to be \textit{mininal} 
	if there are $C\in C(\Omega)$ and $F\in C(\Omega,\mathbb{R}^{n})$ such that 
	$\mathcal{E}=\mathcal{E}_{C,F}$ and the family of functions $\{1,F_1,...,$ $F_n\}$ is 
	linearly independent. In this case, the map $\mathbb{R}^{n}\to \mathcal{E}_{C,F}$, given by 
	$\theta\mapsto p_{C,F}(.;\theta)$, is a bijection, hence defining a global chart for $\mathcal{E}$. 
	The parameters $\theta_{1},...,\theta_{n}$ are called the 
	\textit{natural} or \textit{canonical parameters} of the exponential family.
	The function $\psi_{C,F}$ is known as the \textit{log partition function}, or \textit{cumulant function}. 

	As a matter of notation, we will use 
	\begin{itemize}
	\item $e\textup{-Fam}_{n}(\Omega)$ to denote the set of all 
		minimal exponential families of dimension $n$ defined on $\Omega$, and 
	\item $\textup{Stief}_{n}(C(\Omega))^{\times}$ to denote the set of $n$-frames 
		$F=(F_{1},....F_{n})$ in $\textup{Stief}_{n}(C(\Omega))$ such that the functions 
		$1,F_{1},...,F_{n}$ are linearly independent. 
	\end{itemize}
	Clearly there is a surjective map: 
	\begin{eqnarray*}
		\Phi\,\,:\,\,C(\Omega)\times \textup{Stief}_{n}(C(\Omega))^{\times} \,\,\,\to \,\,\,e\textup{-Fam}_{n}(\Omega),
	\end{eqnarray*}
	defined by $\Phi(C,F)=\mathcal{E}_{C,F}$. 
	Two elements $(C,F)$ and $(C',F')$ in $C(\Omega)\times \textup{Stief}_{n}(C(\Omega))^{\times}$
	are said to be \textit{equivalent} if their images under $\Phi$ coincide, that is, 
	if $\mathcal{E}_{C,F}=\mathcal{E}_{C',F'}$ (equality of subsets of $C(\Omega)$). 
	This defines an equivalence relation $\sim$ on the set $C(\Omega)\times \textup{Stief}_{n}(C(\Omega))^{\times}$, 
	and $\Phi$ descends to a bijection:

	\begin{eqnarray*}
		\big(C(\Omega)\times \textup{Stief}_{n}(C(\Omega))^{\times}\big)/{\,\sim} \,\,\,\to\,\,\,  e\textup{-Fam}_{n}(\Omega). 
	\end{eqnarray*}

	The quotient space will be investigated in the next section, where we will perform a reduction by stages. 
	For now, we focus on the properties of $\sim$. To this end, 
	let $G_{n}\subset \textup{GL}(n+2,\mathbb{R})$ be the group of matrices of the form 

	\begin{eqnarray*}
	\begin{bmatrix}
	1  &  u   &   c  \\
	0  &  A   &   v  \\
	0  &  0   &   1
	\end{bmatrix}
	=
	\begin{bmatrix}
	1       & u_{1}   & \cdots & u_{n}   & c      \\
	0       & A_{11}  & \cdots & A_{1n}  & v_{1}  \\
	\vdots  &  \vdots & \ddots & \vdots  & \vdots \\
	0       & A_{n1}  & \cdots & A_{nn}  & v_{n}  \\
	0       & \cdots  &    0   &   0     &   1
	\end{bmatrix},
	\end{eqnarray*}
	where $A=(A_{ij})\in \textup{GL}(n,\mathbb{R})$, $u=(u_{1},...,u_{n})$ and $v=(v_{1},...,v_{n})$ 
	are vectors in $\mathbb{R}^{n}$ and $c\in \mathbb{R}$.
	The product of two elements in $G_{n}$ is given by 

	\begin{eqnarray}\label{nceknkefnkn}
	\begin{bmatrix}
	1  &  u'   &   c'  \\
	0  &  A'   &   v'  \\
	0  &  0   &   1
	\end{bmatrix}
	\begin{bmatrix}
	1  &  u   &   c  \\
	0  &  A   &  v  \\
	0  &  0   &   1
	\end{bmatrix}
	=\begin{bmatrix}
	1  &  u+A^{T}u'   &   c'+c+\langle u',v\rangle  \\
	0  &  A'A            &   v'+A'v                     \\
	0  &  0              &   1
	\end{bmatrix},
	\end{eqnarray}
	where $A^{T}$ denotes the transpose of $A$. The inverse of an element in $G_{n}$ is given by 

	\begin{eqnarray*}
	\begin{bmatrix}
	1  &  u   &   c  \\
	0  &  A   &   v  \\
	0  &  0   &   1
	\end{bmatrix}^{-1}
	= \begin{bmatrix}
	1  &  -(A^{-1})^{T}u   &   -c+\langle u,A^{-1}v\rangle  \\
	0  &  A^{-1}           &   -A^{-1}v                     \\
	0  &  0   &   1
	\end{bmatrix}.
	\end{eqnarray*}
	The group $G_{n}$ acts on the left on $C(\Omega)\times \textup{Stief}_{n}(C(\Omega))^{\times}$ by 

	\begin{eqnarray*}
	\begin{bmatrix}
	1  &  u   &   c  \\
	0  &  A   &   v  \\
	0  &  0   &   1
	\end{bmatrix}
	\cdot 
	(C,F)= (C+\langle u,F\rangle+c, AF+v)
	\end{eqnarray*}	
	(see Section \ref{nvenwknrfken} for the notation). In the formula above, 
	$c$ and $v$ are interpreted as constant functions in $C(\Omega)$ and 
	$C(\Omega,\mathbb{R}^{n})$, respectively.
	
\begin{proposition}\label{neknkrnkefnk}
	Let $\Omega=\{x_{0},...,x_{m}\}$ be a finite set. Two elements $(C,F)$ and $(C',F')$ in 
	$C(\Omega)\times \textup{Stief}_{n}(C(\Omega))^{\times}$ are equivalent if and only if there is 
	$g=\begin{bsmallmatrix} 
	1   &   u    &   c  \\
	0   &   A    &   v  \\
	0   &   0    &   1  \\
	\end{bsmallmatrix}\in G_{n}$ 
	such that $(C,F)=g\cdot (C',F')$. In this case, the following hold. 
	\begin{enumerate}[(a)]
	\item $p_{C,F}(x;\theta)=p_{C',F'}(x;A^{T}\theta+u)$ for all $x\in \Omega$ and all $\theta\in \mathbb{R}^{n}$. 
	\item $\psi_{C,F}(\theta)=\psi_{C',F'}(A^{T}\theta+u)+\langle \theta,v\rangle+c$ for all $\theta\in \mathbb{R}^{n}$. 
	\end{enumerate}
\end{proposition}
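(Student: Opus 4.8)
The plan is to prove the two implications separately, obtaining parts (a) and (b) as a byproduct of the computation behind the ``if'' direction. For sufficiency, suppose $(C,F) = g\cdot(C',F')$, which by the definition of the action means exactly that $C = C' + \langle u, F'\rangle + c$ and $F = AF' + v$ (with $c$ and $v$ read as constant functions). Expanding $\langle\theta, F\rangle$ componentwise and collecting terms, I would first record the identity $\langle\theta, F\rangle = \langle A^{T}\theta, F'\rangle + \langle\theta, v\rangle$, so that the exponent in \eqref{nvdknkfnknk} becomes
\[ C(x) + \langle\theta, F(x)\rangle = C'(x) + \langle A^{T}\theta + u, F'(x)\rangle + \big(c + \langle\theta, v\rangle\big). \]
Since the parenthesized term is independent of $x$, it factors out of the normalizing sum defining $\psi_{C,F}$; taking logarithms yields (b). Substituting (b) back into \eqref{nvdknkfnknk} cancels this constant and gives (a). Finally, since $A\in\textup{GL}(n,\mathbb{R})$ the affine map $\theta\mapsto A^{T}\theta + u$ is a bijection of $\mathbb{R}^{n}$, so (a) shows that $\mathcal{E}_{C,F}$ and $\mathcal{E}_{C',F'}$ consist of exactly the same functions, i.e.\ $(C,F)\sim(C',F')$.

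The converse is the main obstacle. Assume $\mathcal{E}_{C,F} = \mathcal{E}_{C',F'}$; the task is to reconstruct a suitable $g\in G_{n}$. Minimality makes both $\theta\mapsto p_{C,F}(\cdot;\theta)$ and $\eta\mapsto p_{C',F'}(\cdot;\eta)$ bijections onto the common set, so there is a unique reparametrization $\phi:\mathbb{R}^{n}\to\mathbb{R}^{n}$ with $p_{C,F}(\cdot;\theta) = p_{C',F'}(\cdot;\phi(\theta))$ for all $\theta$. Taking logarithms gives, for every $x\in\Omega$ and $\theta\in\mathbb{R}^{n}$,
\[ C(x) - C'(x) + \langle\theta, F(x)\rangle - \langle\phi(\theta), F'(x)\rangle = \psi_{C,F}(\theta) - \psi_{C',F'}(\phi(\theta)) =: K(\theta), \]
and the decisive point is that $K(\theta)$ does not depend on $x$. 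Note that no regularity of $\phi$ is needed, since I will only evaluate this identity at finitely many values of $\theta$.

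From here I would extract $u,c,A,v$ by specialization. Setting $\theta = 0$ gives $C = C' + \langle\phi(0), F'\rangle + K(0)$, which identifies $u := \phi(0)$ and $c := K(0)$ and establishes the required relation for $C$. Subtracting this off leaves $\langle\theta, F\rangle - \langle\phi(\theta)-\phi(0), F'\rangle = K(\theta) - K(0)$, an equality in $C(\Omega)$ whose right-hand side is a constant function; evaluating at $\theta = e_{k}$ shows that each $F_{k}$ lies in $\textup{span}\{1, F_{1}', \ldots, F_{n}'\}$, and reading off the coefficients produces $A\in\textup{M}(n,\mathbb{R})$ and $v\in\mathbb{R}^{n}$ with $F = AF' + v$. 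It remains to verify $A\in\textup{GL}(n,\mathbb{R})$: if $w^{T}A = 0$ for some $w\neq 0$, then the same relation forces $\sum_{k} w_{k}F_{k}$ to equal the constant function $\langle w, v\rangle$, contradicting the linear independence of $\{1, F_{1}, \ldots, F_{n}\}$ supplied by minimality. Thus $g$ with data $u,c,A,v$ lies in $G_{n}$ and $(C,F) = g\cdot(C',F')$. The crux of the whole argument is this specialization step: the independence of $K(\theta)$ from $x$ is exactly what upgrades the set-theoretic equality of the two families into the rigid affine relation between their generating functions, while minimality is used twice, once to define $\phi$ and once to force $A$ to be invertible.
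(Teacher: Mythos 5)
Your proof is correct, but the hard direction follows a genuinely different route from the paper's. The paper also starts from the reparametrization bijection $\varphi$ with $p_{C,F}(\cdot;\theta)=p_{C',F'}(\cdot;\varphi(\theta))$, but then it subtracts the log-identities at pairs of sample points to eliminate the cumulant terms, invokes Lemma \ref{nekwnknfekwnkn} to invert a matrix of differences $F'(x_{i_b})-F'(x_{i_0})$, and thereby proves that $\varphi$ is \emph{affine}, $\varphi(\theta)=L\theta+u$, with $L$ invertible because $\varphi$ is bijective; it then differentiates the identity in $\theta$ and uses a separation-of-variables argument to obtain $F=L^{T}F'+v$ and the relation for $\psi$, and finally recovers the relation for $C$. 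Your argument is more elementary on three counts: you never need to determine $\varphi$ globally or prove it is affine (you only evaluate the log-identity at the $n+1$ parameter values $\theta=0,e_{1},\dots,e_{n}$, so no regularity of $\varphi$ enters), you need no calculus (the paper differentiates $\psi-\psi'\circ\varphi$), and you do not use Lemma \ref{nekwnknfekwnkn} at all in this direction. You also obtain invertibility of $A$ differently: from the linear independence of $\{1,F_{1},\dots,F_{n}\}$ (minimality of the first family) rather than from bijectivity of the reparametrization. What the paper's approach buys in exchange is the explicit affine form of the reparametrization map, which is how it reads off statements (a) and (b); you instead derive (a) and (b) from the forward computation, which is arguably the more natural place since they are asserted to hold whenever $(C,F)=g\cdot(C',F')$. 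Note also that Lemma \ref{nekwnknfekwnkn} is not wasted effort in the paper: it is reused in Section \ref{nceknkenekn} to prove that the $G_{n}$-action is free and proper, so your shortcut here would not remove it from the paper.
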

\begin{proof}
	Suppose that $(C,F)$ and $(C',F')$ are equivalent. By minimality of the exponential families $\mathcal{E}_{C,F}$ and $\mathcal{E}_{C',F'}$, 
	there is a unique bijection 
	$\varphi:\mathbb{R}^{n}\to \mathbb{R}^{n}$ such that $p_{C,F}(x,\theta)=p_{C',F'}(x,\varphi(\theta))$ for all 
	$\theta\in \mathbb{R}$, and so %(see \eqref{nvdknkfnknk}) 
	\begin{equation}
		\tag{$E_{i}$}
		C(x_{i})+\langle \theta,F(x_{i})\rangle-\psi(\theta)=C'(x_{i})+\langle \varphi(\theta),F'(x_{i})\rangle-\psi'(\varphi(\theta))
		\label{vrknfknkrnfk}
	\end{equation}
	for all $\theta\in \mathbb{R}^{n}$ and all $i\in \{0,1,...,m\}$ (see \eqref{nvdknkfnknk}). 
	Subtracting $(E_{j})$ from $(E_{i})$, $i,j\in \{0,...,m\}$, we get 
	\begin{eqnarray}\label{nfeknkenkwnk}
		\lefteqn{C(x_{i})-C(x_{j})+\langle \theta,F(x_{i})-F(x_{j})\rangle }\nonumber\\
		&=&C'(x_{i})-C'(x_{j})+\langle \varphi(\theta),F'(x_{i})-F'(x_{j})\rangle. 
	\end{eqnarray}
	Because $F_{1}',...,F_{n}'$ and $1$ are linearly independent, there 
	are indices $0\leq i_{0}<...<i_{n}\leq m$ such that the matrix 
	\begin{eqnarray*}
		M'=
		\begin{bsmallmatrix}
			F'_{1}(x_{i_{1}})- F'_{1}(x_{i_{0}}) & \cdots &   F'_{1}(x_{i_{n}}) - F'_{1}(x_{i_{0}})   \\
			    \vdots   &        &     \vdots       \\ 
			F'_{n}(x_{i_{1}})- F'_{n}(x_{i_{0}}) & \cdots &   F'_{n}(x_{i_{n}}) - F'_{n}(x_{i_{0}}) 
		\end{bsmallmatrix}
	\end{eqnarray*}
	is invertible (see Lemma \ref{nekwnknfekwnkn} below). Let $N'$ be the $n\times 1$ matrix defined by 
	\begin{eqnarray*}
	N'=\begin{bsmallmatrix} 
		C'(x_{i_{1}})-C'(x_{i_{0}}) \\
		\vdots    \\
		C'(x_{i_{n}})-C'(x_{i_{0}})
	\end{bsmallmatrix}. 
	\end{eqnarray*}
	Similarly, we define the matrices $M$ and $N$ (with $F$ and $C$ in place of $F'$ and $C'$, respectively). 
	Let $A^{T}$ denote the transposee of a matrix $A$. It follows from \eqref{nfeknkenkwnk} that 
	\begin{eqnarray*}
		N+M^{T}\theta=N'+(M')^{T}\varphi(\theta), 
	\end{eqnarray*}
	where $\theta$ and $\varphi(\theta)$ are regarded as column vectors. Since $M'$ is invertible, we get 
	\begin{eqnarray*}
		\varphi(\theta)=((M')^{T})^{-1}M^{T}\theta+((M')^{T})^{-1}(N-N')=L\theta+u
	\end{eqnarray*}
	for all $\theta\in \mathbb{R}^{n}$, where $L=((M')^{T})^{-1}M^{T}$ and $u=((M')^{T})^{-1}(N-N')$. 
	Therefore $\varphi$ is affine. Note that $L$ is invertible, since $\varphi$ is bijective.

	Taking the derivative of \eqref{vrknfknkrnfk} with respect to $\theta_{k}$ and 
	using $\varphi(\theta)=L\theta+u$ we see that
	\begin{eqnarray*}
		F_{k}(x)-(L^{T}F'(x))_{k}=\dfrac{\partial}{\partial \theta_{k}}\big[\psi(\theta)-\psi'(L\theta+u)\big]
	\end{eqnarray*}
	for all $x\in \Omega$ and all $\theta\in \mathbb{R}^{n}$. Left and right sides depend on different variables and hence 
	there is a vector $v=(v_{1},...,v_{n})\in \mathbb{R}^{n}$ such that 
	\begin{eqnarray}\label{nenwkdknfknd}
		F(x)-L^{T}F'(x)=v
	\end{eqnarray}
 	for all $x\in \Omega$ and 
	\begin{eqnarray}\label{nfenkwknkj}
		\tfrac{\partial}{\partial \theta_{k}}\big[\psi(\theta)-\psi'(L\theta+u)\big]=v_{k}
	\end{eqnarray}
	for all $k\in \{1,...,n\}$ and all $\theta\in \mathbb{R}^{n}$.
	It follows from \eqref{nfenkwknkj} that there exists $c\in \mathbb{R}$ such that 
	\begin{eqnarray}\label{nfrfenwkernfekn}
		\psi(\theta)=\psi'(L\theta+u)+\langle \theta,v\rangle+c
	\end{eqnarray}
	for all $\theta\in \mathbb{R}^{n}$. Then, using the formula $p_{C,F}(x,\theta)=p_{C',F'}(x,L\theta+u)$, \eqref{nenwkdknfknd} 
	and \eqref{nfrfenwkernfekn} we see that 
	\begin{eqnarray}\label{nefknkneknfkn}
		C(x)=C'(x)+\langle u,F'(x)\rangle+c
	\end{eqnarray}
	for all $x\in \Omega$. It follows from \eqref{nenwkdknfknd} and \eqref{nefknkneknfkn} that 
	$(C,F)=
	\begin{bsmallmatrix} 
	1   &   u    &   c  \\
	0   &   A    &   v  \\
	0   &   0    &   1  \\
	\end{bsmallmatrix}\cdot (C',F')$, where $A=L^{T}$. The discussion above also shows (a) and (b). 
	The converse is readily verified, completing the proof.
\end{proof}

\begin{lemma}\label{nekwnknfekwnkn}
	Let $F_{1},...,F_{n}$ be real-valued functions defined on the finite set $\Omega=\{x_{0},...,x_{m}\}$. 
	If $F_{1},...,F_{n}$ and the constant function $1$ are linearly independent, then there are indices 
	$0\leq i_{0}<...<i_{n}\leq m$ such that the following matrix is invertible:
	\begin{eqnarray*}
	\begin{bmatrix}
			F_{1}(x_{i_{1}})- F_{1}(x_{i_{0}}) & \cdots &   F_{1}(x_{i_{n}})- F_{1}(x_{i_{0}})   \\
			    \vdots   &        &     \vdots       \\ 
			F_{n}(x_{i_{1}})- F_{n}(x_{i_{0}}) & \cdots &   F_{n}(x_{i_{n}}) - F_{n}(x_{i_{0}}) 
		\end{bmatrix}.
	\end{eqnarray*}
\end{lemma}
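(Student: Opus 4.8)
The plan is to realize the matrix in the statement as a minor of a larger matrix whose invertibility follows directly from the linear independence hypothesis, and then to relate the two determinants by an elementary column reduction. First I would form the $(n+1)\times(m+1)$ matrix whose rows are the evaluations of $1,F_{1},\ldots,F_{n}$ at the points of $\Omega$:
\begin{eqnarray*}
	P=
	\begin{bmatrix}
		1        & 1        & \cdots & 1        \\
		F_{1}(x_{0}) & F_{1}(x_{1}) & \cdots & F_{1}(x_{m}) \\
		\vdots   &          &        & \vdots   \\
		F_{n}(x_{0}) & F_{n}(x_{1}) & \cdots & F_{n}(x_{m})
	\end{bmatrix}.
\end{eqnarray*}
The hypothesis that $1,F_{1},\ldots,F_{n}$ are linearly independent in $C(\Omega)$ is exactly the statement that the rows of $P$ are linearly independent, so $P$ has rank $n+1$.

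Since $\textup{rank}(P)=n+1$, the column space of $P$ has dimension $n+1$, and I can select $n+1$ columns of $P$ that are linearly independent. Labeling the chosen column indices so that $0\leq i_{0}<i_{1}<\cdots<i_{n}\leq m$, the corresponding $(n+1)\times(n+1)$ submatrix $Q$ of $P$ is invertible, that is, $\det Q\neq 0$.

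The final step is a determinant-preserving column operation. Subtracting the first column of $Q$ (the one indexed by $i_{0}$) from each of the remaining columns leaves $\det Q$ unchanged and produces a matrix whose top row is $(1,0,\ldots,0)$. Expanding the determinant along this row shows that $\det Q$ equals the determinant of the lower-right $n\times n$ block, whose $(k,j)$ entry is $F_{k}(x_{i_{j}})-F_{k}(x_{i_{0}})$; this block is precisely the matrix displayed in the lemma. Hence that matrix has nonzero determinant and is therefore invertible.

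I do not expect a genuine obstacle here: the argument is a clean rank/cofactor computation. The only points requiring care are the index bookkeeping—reordering the selected columns so that $i_{0}$ is the column being subtracted and the remaining indices $i_{1}<\cdots<i_{n}$ are increasing—and verifying that the cofactor expansion along the row $(1,0,\ldots,0)$ isolates exactly the claimed $n\times n$ minor rather than one with a different sign or block.
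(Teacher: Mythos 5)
Your proposal is correct and is essentially the paper's own proof: both pass to the $(n+1)\times(m+1)$ evaluation matrix of $1,F_{1},\ldots,F_{n}$, use linear independence to get rank $n+1$, extract an invertible $(n+1)\times(n+1)$ minor, and reduce its determinant to the claimed $n\times n$ determinant of differences by subtracting the $i_{0}$-column and expanding along the row $(1,0,\ldots,0)$. The paper merely states this determinant identity without spelling out the column operations, which you have correctly supplied.
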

\begin{proof}
	By hypothesis, the rank of the matrix
	\begin{eqnarray*}
		\begin{bsmallmatrix}
			1            & \cdots &   1              \\
			F_{1}(x_{0}) & \cdots &   F_{1}(x_{m})   \\
			    \vdots   &        &     \vdots       \\ 
			F_{n}(x_{0}) & \cdots &   F_{n}(x_{m})
		\end{bsmallmatrix}
	\end{eqnarray*}
	is $n+1$, and thus there are indices $0\leq i_{0}<...<i_{n}\leq m$ such that 
	\begin{eqnarray*}
		\begin{vsmallmatrix}
			1            & \cdots &   1              \\
			F_{1}(x_{i_{0}}) & \cdots &   F_{1}(x_{i_{n}})   \\
			    \vdots   &        &     \vdots       \\ 
			F_{n}(x_{i_{0}}) & \cdots &   F_{n}(x_{i_{n}})
		\end{vsmallmatrix}
		=\begin{vsmallmatrix}
			F_{1}(x_{i_{1}})- F_{1}(x_{i_{0}}) & \cdots &   F_{1}(x_{i_{n}})- F_{1}(x_{i_{0}})   \\
			    \vdots   &        &     \vdots       \\ 
			F_{n}(x_{i_{1}})- F_{n}(x_{i_{0}}) & \cdots &   F_{n}(x_{i_{n}}) - F_{n}(x_{i_{0}}) 
		\end{vsmallmatrix}
		\neq 0. 
		\qedhere
	\end{eqnarray*}
\end{proof}

\begin{remark}\label{nceknwknekefnkn}
	The lemma implies that $\textup{Stief}_{n}(C(\Omega))^{\times}$ is a finite union of open subsets 
	of $C(\Omega)\times ... \times C(\Omega)$ ($n$-times), and thus is open. 
\end{remark}

\section{Exponential families and Grassmannians}\label{nceknkenekn}

	%By Proposition \ref{neknkrnkefnk}, the quotient space 
	%$\big(C(\Omega)\times \textup{Stief}_{n}(C(\Omega))^{\times}\big)/{\,\sim}$ coincides with 
	%the orbit space associtated to the left action of $G_{n}$ on 
	%$C(\Omega)\times \textup{Stief}_{n}(C(\Omega))^{\times}$. 

	In this section, we show that the orbit space $\big(C(\Omega)\times \textup{Stief}_{n}(C(\Omega))^{\times}\big)/G_{n}$ 
	is an affine Grassmannian. We achieve this by first noting that $G_{n}$ has the structure of a 
	semidirect product, which allows us to perform a reduction by stages.

	Let $\textup{Aff}(\mathbb{R}^{n})^{\dag}=\textup{GL}(\mathbb{R}^{n}) \ltimes_{\rho} \mathbb{R}^{n}$ be the 
	semidirect product associated to the anti-homomorphism 
	$\rho:\textup{GL}(n,\mathbb{R})\to \textup{GL}(n,\mathbb{R})$, $A\mapsto A^{T}$ (transpose). 
	Let $\varepsilon:\textup{Aff}(\mathbb{R}^{n})^{\dag} \to \textup{GL}(\mathbb{R}^{n}\oplus \mathbb{R})$ be the homomorphism defined by 
	\begin{eqnarray*}
		\varepsilon(A,u)(v,c):=(Av,\langle u,v\rangle+c). 
	\end{eqnarray*}
	Denote by $\textup{Aff}(\mathbb{R}^{n})^{\dag} \ltimes (\mathbb{R}^{n}\oplus 
	\mathbb{R})$ the semidirect product associated to $\varepsilon$ with multiplication 
	\begin{eqnarray}\label{nvfeekwnkefnkn}
		\lefteqn{\big((A',u'),(v',c')\big)\big((A,u),(v,c)\big)}\nonumber \\
		&=&\big((A'A,u+A^{T}u'),(v'+A'v,c'+c+\langle u',v\rangle)\big). 
	\end{eqnarray}

\begin{lemma}
	The map
	\begin{eqnarray*}
		\left\lbrace
		\begin{array}{lll}
			\textup{Aff}(\mathbb{R}^{n})^{\dag}\ltimes_{} 
				(\mathbb{R}^{n}\oplus\mathbb{R})	&\to& G_{n}, \\[0.5em]
			\big((A,u),(v,c)\big)	&\mapsto & \Big[\begin{smallmatrix}
								1 & u  & c \\
								0 & A  & v \\
								0 & 0  & 1
							   \end{smallmatrix}\Big],
		\end{array}
		\right.
	\end{eqnarray*}
	is a Lie group isomorphism.
\end{lemma}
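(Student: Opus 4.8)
The plan is to verify the three defining properties of a Lie group isomorphism for the map in question, which I will call $\Psi$: that it is a bijection, a group homomorphism, and a diffeomorphism. Bijectivity is essentially built into the definition of $G_{n}$: every element of $G_{n}$ is a matrix of the displayed block form with $A\in\textup{GL}(n,\mathbb{R})$, $u,v\in\mathbb{R}^{n}$ and $c\in\mathbb{R}$, and these four blocks can be read off uniquely from the matrix. Thus $\Psi$ sends the data $\big((A,u),(v,c)\big)$ to a matrix in $G_{n}$, distinct data produce distinct matrices, and every matrix of $G_{n}$ is attained. Hence $\Psi$ is a bijection onto $G_{n}$.

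The central step is the homomorphism property $\Psi(\xi'\xi)=\Psi(\xi')\Psi(\xi)$. First I would carry out the block multiplication of $\Psi(\xi')$ and $\Psi(\xi)$, with $\xi'=\big((A',u'),(v',c')\big)$ and $\xi=\big((A,u),(v,c)\big)$, obtaining exactly the right-hand side of \eqref{nceknkefnkn}. Then I would compare this with the image under $\Psi$ of the group product \eqref{nvfeekwnkefnkn}. The only point requiring care is the top-middle block: naive multiplication produces the row vector $u+u'A$, whereas \eqref{nvfeekwnkefnkn} records the $\mathbb{R}^{n}$-entry $u+A^{T}u'$; these agree because the $i$-th components $\sum_{j}u'_{j}A_{ji}$ coincide, so the row vector $u'A$ and the column vector $A^{T}u'$ represent the same element of $\mathbb{R}^{n}$. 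The remaining blocks ($A'A$ in the middle, $v'+A'v$ on the right, $c'+c+\langle u',v\rangle$ in the corner) match \eqref{nvfeekwnkefnkn} immediately. I expect this transpose bookkeeping to be the main (and essentially only) obstacle: it is precisely here that the choice of the anti-homomorphism $\rho(A)=A^{T}$ defining $\textup{Aff}(\mathbb{R}^{n})^{\dag}$ becomes essential, since with the ordinary affine group the abstract product would fail to reproduce matrix multiplication.

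Finally I would address smoothness. The domain is a Lie group by construction (the semidirect product of the Lie group $\textup{Aff}(\mathbb{R}^{n})^{\dag}$ and the vector space $\mathbb{R}^{n}\oplus\mathbb{R}$ via the homomorphism $\varepsilon$), and $G_{n}$ is a Lie group as a closed subgroup of $\textup{GL}(n+2,\mathbb{R})$. In the natural coordinates $(A,u,v,c)$ the entries of $\Psi$ are linear functions of those coordinates, so $\Psi$ is smooth; its inverse merely reads the four blocks off a matrix of $G_{n}$ and is likewise linear in the matrix entries, hence smooth. A smooth bijective homomorphism with smooth inverse is a Lie group isomorphism, which completes the proof. (Alternatively, one may invoke the standard fact that a bijective Lie group homomorphism is automatically a diffeomorphism; and note that, since matrix multiplication in $G_{n}$ is associative, the intertwining established in the second paragraph even confirms a posteriori that \eqref{nvfeekwnkefnkn} defines a genuine group law.)
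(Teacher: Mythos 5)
Your proposal is correct and takes essentially the same approach as the paper, whose entire proof is ``By a direct verification using \eqref{nceknkefnkn} and \eqref{nvfeekwnkefnkn}''; you have simply written out the verification the paper leaves to the reader. Your identification of the row vector $u'A$ with the column vector $A^{T}u'$ is exactly the bookkeeping point that makes the two displayed products match, and the bijectivity and smoothness observations are the routine remaining details.
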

\begin{proof}
	By a direct verification using \eqref{nceknkefnkn} and \eqref{nvfeekwnkefnkn}.
\end{proof}

	In what follows, we will identify $G_{n}$ and $\textup{Aff}(\mathbb{R}^{n})^{\dag}\ltimes_{}(\mathbb{R}^{n}\oplus\mathbb{R})$ 
	whenever convenient. In particular, we will regard $\mathbb{R}^{n}\oplus \mathbb{R}$ as a normal closed subgroup of $G_{n}$. \\

	Let $\Omega=\{x_{0},...,x_{m}\}$ be a finite set. Recall that $G_{n}$ acts on $C(\Omega)\times \textup{Stief}_{n}(C(\Omega))^{\times}$ by 
	\begin{eqnarray}\label{nvkewnkenkn}
		\big((A,u),(v,c)\big)\cdot (C,F)=(C+\langle u,F\rangle+c,AF+v)
	\end{eqnarray}
	(see Section \ref{nvenwknrfken} for the notation). This action is smooth when 
	$C(\Omega)\times \textup{Stief}_{n}(C(\Omega))^{\times}$ is considered as an open subset of the 
	$(n+1)$-fold Cartesian product $C(\Omega)\times ...\times C(\Omega)$ (see Remark 2.4).

	We wish to reduce 
	$C(\Omega)\times \textup{Stief}_{n}(C(\Omega))^{\times}$ by the action of $G_{n}$ in two stages, 
	first by the normal subgroup $\mathbb{R}^{n}\oplus\mathbb{R}$, and then by 
	$G_{n}/(\mathbb{R}^{n}\oplus \mathbb{R})=\textup{Aff}(\mathbb{R}^{n})^{\dag}$. 
	We begin by showing that the $G_{n}$-action satisfies the reduction by stages hypothesis (see Proposition \ref{nfekjkfekkjkfejk}): 

\begin{lemma}
	The action of $G_{n}$ on $C(\Omega)\times \textup{Stief}_{n}(C(\Omega))^{\times}$ is free and proper. 
\end{lemma}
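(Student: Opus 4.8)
The plan is to establish freeness and properness directly from the action formula \eqref{nvkewnkenkn}. This cannot be deduced from reduction by stages, because the present statement is exactly the hypothesis ``free and proper'' required to invoke Proposition \ref{nfekjkfekkjkfejk}. The whole argument rests on one structural observation: if we append the constant function $1$ as a last slot, then the $G_{n}$-action \eqref{nvkewnkenkn} is the restriction to the tuple $(C,F_{1},\dots,F_{n},1)$ of the standard linear action of $G_{n}\subset \textup{GL}(n+2,\mathbb{R})$, with the last slot pinned to $1$. Consequently every identity reduces to matching coefficients against the linearly independent family $1,F_{1},\dots,F_{n}$ provided by the definition of $\textup{Stief}_{n}(C(\Omega))^{\times}$.

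For freeness I would suppose that $g=\big((A,u),(v,c)\big)$ fixes $(C,F)$. The frame component of \eqref{nvkewnkenkn} gives $AF+v=F$, that is $\sum_{j}A_{ij}F_{j}+v_{i}\cdot 1=F_{i}$ for each $i$, while the scalar component gives $\langle u,F\rangle+c\cdot 1=0$. Since $1,F_{1},\dots,F_{n}$ are linearly independent in $C(\Omega)$, matching coefficients forces $A=\textup{Id}$, $v=0$, $u=0$ and $c=0$, so $g$ is the identity and the action is free.

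For properness I would use the sequential criterion recalled in the preliminaries. Suppose $(C_{k},F_{k})\to (C,F)$ in $C(\Omega)\times \textup{Stief}_{n}(C(\Omega))^{\times}$ and $g_{k}\cdot(C_{k},F_{k})\to (C',F')$, where $g_{k}=\big((A_{k},u_{k}),(v_{k},c_{k})\big)$. By Lemma \ref{nekwnknfekwnkn} (see its proof) there are indices $i_{0}<\cdots<i_{n}$ for which the $(n+1)\times(n+1)$ matrix $M$ of the values of $1,F_{1},\dots,F_{n}$ at $x_{i_{0}},\dots,x_{i_{n}}$ is invertible; since $F_{k}\to F$ the analogous matrices $M_{k}$ converge to $M$, hence are invertible with $M_{k}^{-1}\to M^{-1}$ for large $k$. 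Evaluating the relation $A_{k}F_{k}+v_{k}\to F'$ at these $n+1$ points expresses, for each $i$, the coefficient row $\big((v_{k})_{i},(A_{k})_{i1},\dots,(A_{k})_{in}\big)$ as a convergent value-row times $M_{k}^{-1}$; therefore $A_{k}\to A$ and $v_{k}\to v$. Combining $C_{k}+\langle u_{k},F_{k}\rangle+c_{k}\to C'$ with $C_{k}\to C$ and applying the same inversion yields $u_{k}\to u$ and $c_{k}\to c$.

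The step I expect to be the only genuine obstacle is verifying that the limit $\big((A,u),(v,c)\big)$ actually lies in $G_{n}$, i.e. that $A\in \textup{GL}(n,\mathbb{R})$, since the computation above only produces $A\in \textup{M}(n,\mathbb{R})$. Passing to the limit in $A_{k}F_{k}+v_{k}\to F'$ gives $F'=AF+v$, and $F'\in \textup{Stief}_{n}(C(\Omega))^{\times}$ because this is the space in which the second sequence converges. If $A$ were singular, a nonzero row vector $\lambda$ with $\lambda A=0$ would give $\sum_{i}\lambda_{i}F'_{i}=\langle \lambda,v\rangle\cdot 1$, contradicting the linear independence of $1,F'_{1},\dots,F'_{n}$. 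Hence $A$ is invertible, $g_{k}\to \big((A,u),(v,c)\big)\in G_{n}$, and the action is proper; in fact the full sequence converges, which is stronger than the subconvergence required.
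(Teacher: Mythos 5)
Your proof is correct and takes essentially the same route as the paper's: freeness is obtained from the stabilizer equations together with the linear independence of $1,F_{1},\dots,F_{n}$, and properness via the sequential criterion, solving linearly for the group parameters by evaluating at points where Lemma \ref{nekwnknfekwnkn} gives an invertible matrix, and then checking that the limiting matrix is invertible. Your minor variants --- coefficient matching instead of the paper's difference-matrix step in the freeness part, a single $(n+1)\times(n+1)$ value matrix in place of the paper's $n\times n$ difference matrices $\Omega_{\alpha}$, $\Psi_{\alpha}$, and the kernel-vector argument for invertibility of the limit instead of a second application of Lemma \ref{nekwnknfekwnkn} to the limit frame $H^{*}$ --- streamline but do not change the argument.
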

\begin{proof}
	First we prove that the action is free. Suppose that $\big((A,u),(v,c)\big)\in G_{n}$ is in the stabilizer of $(C,F)$. 
	Thus
	\begin{eqnarray}\label{nfekwnekefnkn}
		\left\lbrace
		\begin{array}{lll}
			\langle u,F\rangle +c &=& 0, \\
			AF+v                  &=& F. 
		\end{array}
		\right.
	\end{eqnarray}
	Since $1,F_{1},...,F_{n}$ are linearly independent, the first equation implies that 
	$u=0$ and $c=0.$ To see that $A$ is the identity matrix, we apply Lemma \ref{nekwnknfekwnkn} to $F$: there are indices 
	$0\leq i_{0}<i_{1}<...<i_{n}\leq m$ such that the $n\times n$ matrix $\Omega$, whose $(a,b)$-entry is 
	$\Omega_{ab}= F_{a}(x_{i_{b}})-F_{a}(x_{i_{0}})$, is invertible. The matrix $\Omega$ is related to $A$ 
	by $A\Omega=\Omega$ (this follows from the second equation of \eqref{nfekwnekefnkn}), and 
	thus $A=I_{n}$. The fact that $v=0$ is immediate from the second equation of \eqref{nfekwnekefnkn}. This 
	completes the proof that the $G_{n}$-action is free. 

	Next we show that the $G_{n}$-action is proper. Let 
	$\big((A_{\alpha},u_{\alpha})(v_{\alpha},c_{\alpha})\big)_{\alpha\in \mathbb{N}}$
	and $(C_{\alpha},F_{\alpha})_{\alpha\in \mathbb{N}}$ be sequences of points in $G_{n}$ and 
	$C(\Omega)\times \textup{Stief}_{n}(C(\Omega))^{\times}$, respectively, such that 
	\begin{itemize}	
		\item $(C_{\alpha},F_{\alpha})\to (C^{*},F^{*})\in C(\Omega)\times \textup{Stief}_{n}(C(\Omega))^{\times}$ 
			and 
		\item $\Big[\begin{smallmatrix}
			1 & u_{\alpha}  & c_{\alpha} \\
			0 & A_{\alpha}  & v_{\alpha} \\
			0 & 0  & 1
		   \end{smallmatrix}\Big]\cdot (C_{\alpha},F_{\alpha})=:
			(G_{\alpha},H_{\alpha})\to (G^{*},H^{*})\in C(\Omega)\times \textup{Stief}_{n}(C(\Omega))^{\times}$ 
	\end{itemize}
	as $\alpha\to \infty$. We must show that $\big((A_{\alpha},u_{\alpha})(v_{\alpha},c_{\alpha})\big)_{\alpha\in \mathbb{N}}$ 
	has a convergent subsequence in $G_{n}$. The second condition above implies that 
	\begin{eqnarray}\label{nveeknwkenfk}
	\left\lbrace
	\begin{array}{lll}
		C_{\alpha}+\langle u_{\alpha},F_{\alpha}\rangle+c_{\alpha} &=& G_{\alpha}, \\[0.6em]
		A_{\alpha}F_{\alpha}+v_{\alpha}	                           &=& H_{\alpha},
	\end{array}
	\right.
	\end{eqnarray}
	for all $\alpha\in \mathbb{N}$. We will show first that $(A_{\alpha})_{\alpha\in \mathbb{N}}$ converges 
	to an invertible matrix. To see this, we apply 
	Lemma \ref{nekwnknfekwnkn} to $F^{*}$: there are indices 
	$0\leq j_{0}<j_{1}<...<j_{n}\leq m$ such that the $n\times n$ matrix $\Omega^{*}$, whose $(a,b)$-entry is 
	$\Omega^{*}_{ab}= F^{*}_{a}(x_{j_{b}})-F^{*}_{a}(x_{j_{0}})$, is invertible. Let $\Omega_{\alpha}$, 
	$\Psi_{\alpha}$ and $\Psi^{*}$ be the $n\times n$ matrices whose $(a,b)$-entries are 
	\begin{eqnarray*}
	(\Omega_{\alpha})_{ab} &=& (F_{\alpha})_{a}(x_{j_{b}})-(F_{\alpha})_{a}(x_{j_{0}}),\\
	(\Psi_{\alpha})_{ab}   &=& (H_{\alpha})_{a}(x_{j_{b}})-(H_{\alpha})_{a}(x_{j_{0}}),\\
	(\Psi^{*})_{ab}   &=&  (H^{*})_{a}(x_{j_{b}})-(H^{*})_{a}(x_{j_{0}}),
	\end{eqnarray*}
	respectively. Note that:
	\begin{enumerate}[(1)]
	\item    $\Omega_{\alpha}\to \Omega^{*}$ and $\Psi_{\alpha}\to \Psi^{*}$ as $\alpha\to \infty$ 
		 (convergence in $\textup{M}(n,\mathbb{R})$).
	\item   $\Omega_{\alpha}$ is invertible when $\alpha$ is large enough. 
	\item $A_{\alpha}\Omega_{\alpha}=\Psi_{\alpha}$ for all $\alpha\in \mathbb{N}$. 
	\end{enumerate}
	The first assertion follows from the fact that $(F_{\alpha})_{\alpha\in \mathbb{N}}$ and 
	$(H_{\alpha})_{\alpha\in \mathbb{N}}$ converge 
	to $F^{*}$ and $H^{*}$, respectively. The second assertion is a consequence of (1) and the 
	fact that $\Omega^{*}$ is invertible. The third assertion follows from 
	the second equation of \eqref{nveeknwkenfk}.
	
	It follows from (2) and (3) that $A_{\alpha}=\Psi_{\alpha}(\Omega_{\alpha})^{-1}$ 
	for all $\alpha\in \mathbb{N}$ large enough, and thus 
	$A_{\alpha}$ converges to $A^{*}:=\Psi^{*}(\Omega^{*})^{-1}$ in $\textup{M}(n,\mathbb{R})$. To see that $A^{*}$ is invertible, 
	we apply again Lemma \ref{nekwnknfekwnkn}, but this time to $H^{*}$, and carry out a discussion analogous to that above.
 	We find that there are two $n\times n$ matrices, say $P$ and $Q$, with $Q$ invertible, 
	such that $A^{*}P=Q$, which forces $A^{*}$ to be invertible.

	Finally, we prove that the sequences $(u_{\alpha})_{\alpha\in \mathbb{N}}$, $(v_{\alpha})_{\alpha\in \mathbb{N}}$ and 
	$(c_{\alpha})_{\alpha\in \mathbb{N}}$ converge. Clearly, the second equation of \eqref{nveeknwkenfk} implies that 
	$(v_{\alpha})_{\alpha\in \mathbb{N}}$ converges. To see that $(u_{\alpha})_{\alpha\in \mathbb{N}}$ converges, 
	we will use the matrices $\Omega^{*}$ and $\Omega_{\alpha}$ defined above. In view of the first equation of 
	\eqref{nveeknwkenfk}, we have $\Omega_{\alpha}^{T}u_{\alpha}=R_{\alpha}$ for all $\alpha\in \mathbb{N}$, where $\Omega_{\alpha}^{T}$ 
	is the transpose of $\Omega_{\alpha}$, and $R_{\alpha}$ is the vector in $\mathbb{R}^{n}$ defined by 
	\begin{eqnarray*}
		R_{\alpha}=\begin{bmatrix}
		G_{\alpha}(x_{j_{1}})-G_{\alpha}(x_{j_{0}})-(C_{\alpha}(x_{j_{1}})-C_{\alpha}(x_{j_{0}}))\\
		\vdots\\
		G_{\alpha}(x_{j_{n}})-G_{\alpha}(x_{j_{0}})-(C_{\alpha}(x_{j_{n}})-C_{\alpha}(x_{j_{0}}))
		\end{bmatrix}.
	\end{eqnarray*}
	Since $(C_{\alpha})_{\alpha\in \mathbb{N}}$ and $(G_{\alpha})_{\alpha\in \mathbb{N}}$ converge, 
	$R_{\alpha}$ converges to some $R^{*}\in \mathbb{R}^{n}$, and because $\Omega_{\alpha}$ converges 
	to the invertible matrix $\Omega^{*}$, $(u_{\alpha})_{\alpha\in \mathbb{N}}$ converges to 
	$((\Omega^{*})^{-1})^{T}R^{*}$. The fact that $(c_{\alpha})_{\alpha\in \mathbb{N}}$ converges is then obvious. 
\end{proof}

	We now proceed with the reduction by stages. The induced action of the group $\mathbb{R}^{n}\oplus \mathbb{R}\subset G_{n}$ on 
	$C(\Omega)\times \textup{Stief}_{n}(C(\Omega))^{\times}$ is given by 
	\begin{eqnarray}\label{nfdksdnkfnkn}
		(v,c)\cdot (C,F)=(C+c,F+v).
	\end{eqnarray}
	We will use $[(C,F)]$ to denote the equivalence class of $(C,F)$ with respect to the action of 
	$\mathbb{R}^{n}\oplus \mathbb{R}$. If $C\in C(\Omega)$, we will use $[C]$ to denote the equivalence class of $C$ 
	in $C(\Omega)/\mathbb{R}$ (here $\mathbb{R}$ is identified with the space of constant functions on $\Omega$).

\begin{lemma}[\textbf{First reduction}]
	The map 
	\begin{eqnarray}\label{nceejnefnknk}
		\left\lbrace
		\begin{array}{rcl}
			\big(C(\Omega)\times \textup{Stief}_{n}(C(\Omega))^{\times}\big)/(\mathbb{R}^{n}\oplus \mathbb{R})
			&\to  &
			C(\Omega)/\mathbb{R}\times \textup{Stief}_{n}\big(C(\Omega)/\mathbb{R}\big),\\[0.5em]
			\textbf{}[(C,(F_{1},...,F_{n})]  &\mapsto & \big([C], ([F_{1}],...,[F_{n}])\big). 
		\end{array}
		\right.
	\end{eqnarray}
	is a diffeomorphism.
\end{lemma}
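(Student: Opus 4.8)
The strategy is to realize \eqref{nceejnefnknk} as the canonical identification of two presentations of a single quotient, by exhibiting it as the diffeomorphism induced by a pair of surjective submersions that have the same fibers. Recall the general principle I intend to invoke: if $\pi\colon M\to N$ and $\Psi\colon M\to P$ are surjective submersions with identical fibers (i.e. $\pi(x)=\pi(y)\iff\Psi(x)=\Psi(y)$), then the universal property of submersions produces, in each direction, a smooth map descending through the other, and these are mutually inverse, so the induced map $N\to P$ is a diffeomorphism.

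Let $\pi$ be the quotient map of $C(\Omega)\times \textup{Stief}_{n}(C(\Omega))^{\times}$ by $\mathbb{R}^{n}\oplus\mathbb{R}$. By the preceding lemma the $G_{n}$-action is free and proper, so by Proposition~\ref{nfekjkfekkjkfejk}(a) its closed normal subgroup $\mathbb{R}^{n}\oplus\mathbb{R}$ acts freely and properly as well; hence the quotient is a manifold and $\pi$ is a surjective submersion. I then introduce
\begin{eqnarray*}
\Psi\colon C(\Omega)\times \textup{Stief}_{n}(C(\Omega))^{\times} &\longrightarrow& C(\Omega)/\mathbb{R}\times \textup{Stief}_{n}(C(\Omega)/\mathbb{R}),\\
(C,F) &\longmapsto& \big([C],([F_{1}],...,[F_{n}])\big),
\end{eqnarray*}
and the whole proof reduces to checking that $\Psi$ is a surjective submersion onto the indicated target with the same fibers as $\pi$.

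The first ingredient is the elementary observation that $[F_{1}],...,[F_{n}]$ are linearly independent in $C(\Omega)/\mathbb{R}$ if and only if $1,F_{1},...,F_{n}$ are linearly independent in $C(\Omega)$, since $\sum_{i}\lambda_{i}[F_{i}]=0$ holds exactly when $\sum_{i}\lambda_{i}F_{i}$ is a constant function. Writing $q\colon C(\Omega)\to C(\Omega)/\mathbb{R}$ for the canonical (linear, hence submersive) projection and $q^{n}$ for its $n$-fold product, this says $\textup{Stief}_{n}(C(\Omega))^{\times}=(q^{n})^{-1}\big(\textup{Stief}_{n}(C(\Omega)/\mathbb{R})\big)$. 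Consequently $\Psi$ really lands in $C(\Omega)/\mathbb{R}\times \textup{Stief}_{n}(C(\Omega)/\mathbb{R})$ and is nothing but the restriction of the linear surjection $q\times q^{n}$ to the open saturated set $C(\Omega)\times \textup{Stief}_{n}(C(\Omega))^{\times}$, which is the full preimage of the open Stiefel manifold (openness by Remark~\ref{nceknwknekefnkn} and Section~\ref{nvenwknrfken}); it is therefore a surjective submersion onto that open target. Comparing fibers, $\Psi(C,F)=\Psi(C',F')$ holds iff $C'-C$ and each $F_{i}'-F_{i}$ are constant, iff $(C',F')=(v,c)\cdot(C,F)$ for a unique $(v,c)\in\mathbb{R}^{n}\oplus\mathbb{R}$ (see \eqref{nfdksdnkfnkn}); thus the fibers of $\Psi$ are precisely the $\mathbb{R}^{n}\oplus\mathbb{R}$-orbits, i.e. the fibers of $\pi$. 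Applying the principle above yields that the descended map $\overline{\Psi}$, which is exactly \eqref{nceejnefnknk}, is a diffeomorphism; as a consistency check, both manifolds have dimension $(n+1)m$.

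The only genuinely delicate point is verifying that $\Psi$ is a submersion \emph{onto the open set} $\textup{Stief}_{n}(C(\Omega)/\mathbb{R})$ rather than merely a smooth surjection onto its image: this is precisely where the identity $\textup{Stief}_{n}(C(\Omega))^{\times}=(q^{n})^{-1}(\textup{Stief}_{n}(C(\Omega)/\mathbb{R}))$ and the saturation of the domain (adding constants to the $F_{i}$ preserves independence of $1,F_{1},...,F_{n}$) are essential. Once this is secured, the equality of fibers and the diffeomorphism conclusion are formal consequences of the two-submersions principle, with no further computation needed.
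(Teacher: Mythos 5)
Your proposal is correct and takes essentially the same route as the paper: the paper's proof likewise rests on the uniqueness-of-smooth-quotients theorem (\cite[Theorem 4.29]{LeeSmooth}, your ``two surjective submersions with identical fibers'' principle) combined with the same key fact that $(F_{1},...,F_{n})\in \textup{Stief}_{n}(C(\Omega))^{\times}$ if and only if $([F_{1}],...,[F_{n}])\in \textup{Stief}_{n}(C(\Omega)/\mathbb{R})$, and with the identification of the fibers of $(C,F)\mapsto([C],[F])$ as the $\mathbb{R}^{n}\oplus\mathbb{R}$-orbits. Your write-up merely makes explicit the details (freeness/properness of the restricted action, saturation and openness of the domain) that the paper leaves to the reader.
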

\begin{proof}
	This follows easily from standard geometric-differential arguments (in particular, \cite[Theorem 4.29]{LeeSmooth}), and the following 
	simple facts: 
	\begin{enumerate}[(1)]
	\item If $F_{1},...,F_{n}\in C(\Omega)$, then $(F_{1},...,F_{n})\in \textup{Stief}_{n}(C(\Omega))^{\times}$ if and only if 
		$([F_{1}],...,[F_{n}])\in \textup{Stief}_{n}(C(\Omega)/\mathbb{R})$. 
	\item The map \begin{eqnarray*}
		g:  \left\lbrace
		\begin{array}{rcl}
		C(\Omega)/\mathbb{R}\times \textup{Stief}_{n}\big(C(\Omega)/\mathbb{R}\big)&\to  &
		\big(C(\Omega)\times \textup{Stief}_{n}(C(\Omega))^{\times}\big)/(\mathbb{R}^{n}\oplus \mathbb{R}),\\[0.5em]
		\big([G], ([H_{1}],...,[H_{n}])\big)  &\mapsto & \textbf{}\big[\big(G,(H_{1},...,H_{n})\big)\big] 
		\end{array}
		\right.
	\end{eqnarray*}
	is an inverse for $f$. \qedhere
	\end{enumerate}
\end{proof}

	In what follows, we will identify 
	$\big(C(\Omega)\times \textup{Stief}_{n}(C(\Omega))^{\times}\big)/(\mathbb{R}^{n}\oplus \mathbb{R})$ and 
	$C(\Omega)/\mathbb{R}\times \textup{Stief}_{n}(C(\Omega)/\mathbb{R})$.

	Now we focus our attention on the second reduction. Given $F=(F_{1},...,F_{n})\in C(\Omega,\mathbb{R}^{n})$, we 
	wite $[F]=([F_{1}],...,[F_{n}])\in C(\Omega)/\mathbb{R}\times ... \times C(\Omega)/\mathbb{R}$. 
	In view of Proposition \ref{nfekjkfekkjkfejk}, the quotient group $G_{n}/(\mathbb{R}^{n}\oplus \mathbb{R})=
	\textup{Aff}(\mathbb{R}^{n})^{\dag}$ acts on the left on $C(\Omega)/\mathbb{R}\times \textup{Stief}_{n}(C(\Omega)/\mathbb{R})$ by 
	\begin{eqnarray}
		(A,u)\cdot ([C], [F]) = (C+\langle u,[F]\rangle, A[F]), 
	\end{eqnarray}
	where $\langle u,[F]\rangle= u_{1}[F_{1}]+...+u_{n}[F_{n}]$ and $A[F]=
	(a_{11}[F_{1}]+...+a_{1n}[F_{n}],...,a_{n1}[F_{1}]+...+a_{nn}[F_{n}])$.

\begin{lemma}[\textbf{Second reduction}]
	The map 
	\begin{eqnarray}\label{nceejnefnknkk}
		\left\lbrace
		\begin{array}{rcl}
			\big(C(\Omega)/\mathbb{R}\times 
			\textup{Stief}_{n}\big(C(\Omega)/\mathbb{R}\big)\big)/\textup{Aff}(\mathbb{R}^{n})^{\dag}
			&\to  & \textup{Graff}_{n}(C(\Omega)/\mathbb{R}),\\[0.5em]
			\textbf{}\big[\big([C],([F_{1}],...,[F_{n}])\big)\big]  &\mapsto & [C]+\textup{span}\{[F_{1}],...,[F_{n}]\},
		\end{array}
		\right.
	\end{eqnarray}
	is a diffeomorphism, where $\big[\big([C],([F_{1}],...,[F_{n}])\big)\big]$ denotes the equivalence class of 
	$([C],([F_{1}],...,[F_{n}]))$ with respect to the action of $\textup{Aff}(\mathbb{R}^{n})^{\dag}$. 
\end{lemma}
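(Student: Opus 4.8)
The plan is to recognize the second reduction as a direct instance of the general construction of Section~\ref{nvenwknrfken}, specialized to the finite-dimensional vector space $V:=C(\Omega)/\mathbb{R}$. The first step is bookkeeping: under the identifications already in force, the action of $\textup{Aff}(\mathbb{R}^{n})^{\dag}=G_{n}/(\mathbb{R}^{n}\oplus\mathbb{R})$ on $C(\Omega)/\mathbb{R}\times\textup{Stief}_{n}(C(\Omega)/\mathbb{R})$ introduced just before the statement is literally the left action \eqref{nvkenrknknek} of $\textup{Aff}(n,\mathbb{R})^{\dag}$ on $V\times\textup{Stief}_{n}(V)$, reading $v_{0}=[C]$, $x=u$ and $F=[F]=([F_{1}],\dots,[F_{n}])$. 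Likewise, the map \eqref{nceejnefnknkk} in the statement is exactly the induced bijection \eqref{nvkwnfekrnkefn} attached to $\pi$ for this $V$.

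The substantive input is that this $\textup{Aff}(\mathbb{R}^{n})^{\dag}$-action is free and proper, which is precisely the lemma stated (but left unproved) in Section~\ref{nvenwknrfken}. I would discharge it here by reduction by stages. The preceding lemma of this section shows that $G_{n}$ acts freely and properly on $C(\Omega)\times\textup{Stief}_{n}(C(\Omega))^{\times}$, and $\mathbb{R}^{n}\oplus\mathbb{R}$ is a closed normal subgroup with $G_{n}/(\mathbb{R}^{n}\oplus\mathbb{R})\cong\textup{Aff}(\mathbb{R}^{n})^{\dag}$; hence Proposition~\ref{nfekjkfekkjkfejk}(b) yields a free and proper action of the quotient group on the first-stage quotient, which the First Reduction lemma identifies with $C(\Omega)/\mathbb{R}\times\textup{Stief}_{n}(C(\Omega)/\mathbb{R})$. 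This proves the deferred lemma for $V=C(\Omega)/\mathbb{R}$; since every finite-dimensional real vector space is linearly isomorphic to $C(\Omega)/\mathbb{R}$ for a suitable $\Omega$ and the action is natural under linear isomorphisms, the general statement follows as well.

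With freeness and properness available, Proposition~\ref{nvkednknkefn} applies verbatim with $V=C(\Omega)/\mathbb{R}$: the source of \eqref{nceejnefnknkk} is a manifold and \eqref{nvkwnfekrnkefn} is a diffeomorphism onto $\textup{Graff}_{n}(C(\Omega)/\mathbb{R})$ endowed with its canonical manifold structure. By the first step, \eqref{nceejnefnknkk} coincides with \eqref{nvkwnfekrnkefn}, so it is a diffeomorphism, as claimed. I do not expect a computational difficulty anywhere; the only point requiring care is to avoid circularity, since the manifold structure on $\textup{Graff}_{n}(C(\Omega)/\mathbb{R})$ is \emph{defined} by Proposition~\ref{nvkednknkefn} so as to make \eqref{nvkwnfekrnkefn} a diffeomorphism. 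The genuine content is therefore the free-and-proper property, which is produced by reduction by stages rather than taken as given from Section~\ref{nvenwknrfken}.
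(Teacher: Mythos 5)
Your proposal is correct and takes essentially the same route as the paper: the paper's proof is exactly the application of Proposition \ref{nvkednknkefn} with $V=C(\Omega)/\mathbb{R}$, and the free-and-proper lemma deferred from Section \ref{nvenwknrfken} is discharged in Section \ref{nceknkenekn} precisely as you describe---the direct $G_{n}$ free-and-proper lemma, then Proposition \ref{nfekjkfekkjkfejk}(b), then the First Reduction identification. Your explicit attention to the potential circularity (the manifold structure on $\textup{Graff}_{n}(C(\Omega)/\mathbb{R})$ being \emph{defined} so that \eqref{nvkwnfekrnkefn} is a diffeomorphism) simply spells out the logical bookkeeping the paper leaves implicit.
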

\begin{proof}
	This is a straightforward application of Proposition \ref{nvkednknkefn} with $V=C(\Omega)/\mathbb{R}$. 
\end{proof}
	Combining the two reductions, we get: 
\begin{theorem}\label{nekwnkefnkn}
	Let $\Omega=\{x_{0},...,x_{m}\}$ be a finite set. Then the $G_{n}$-action on $C(\Omega)\times \textup{Stief}_{n}(C(\Omega))^{\times}$
	is free and proper (in particular, the corresponding orbit space is a manifold), and the map 
	\begin{eqnarray*}
		\left\lbrace
		\begin{array}{rcl}
			\big(C(\Omega)\times \textup{Stief}_{n}(C(\Omega))^{\times}\big)/G_{n} 
				&\to  & \textup{Graff}_{n}(C(\Omega)/\mathbb{R}),\\[0.5em]
			[(C,F)] &\mapsto & [C]+\textup{span}\{[F_{1}],...,[F_{n}]\},
		\end{array}
		\right.
	\end{eqnarray*}
	is a diffeomorphism. 
\end{theorem}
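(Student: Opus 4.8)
The plan is to derive the theorem directly from reduction by stages (Proposition \ref{nfekjkfekkjkfejk}), feeding in the semidirect product structure of $G_{n}$ together with the two reduction lemmas already established. Throughout, write $M=C(\Omega)\times \textup{Stief}_{n}(C(\Omega))^{\times}$ and let $H=\mathbb{R}^{n}\oplus \mathbb{R}$, viewed as the normal closed subgroup of $G_{n}$ identified above, so that $G_{n}/H\cong \textup{Aff}(\mathbb{R}^{n})^{\dag}$. The freeness and properness of the $G_{n}$-action on $M$ is precisely the content of the preceding lemma; this settles the first clause of the theorem and licenses the use of Proposition \ref{nfekjkfekkjkfejk}.

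First I would apply part (c) of Proposition \ref{nfekjkfekkjkfejk} to obtain a diffeomorphism $M/G_{n}\to (M/H)/(G_{n}/H)$ sending $[(C,F)]_{G_{n}}$ to $\big[[(C,F)]_{H}\big]_{G_{n}/H}$. Parts (a) and (b) guarantee that $H$ acts freely and properly on $M$ and that $G_{n}/H$ acts freely and properly on $M/H$, so every space appearing below is a manifold and every map is smooth between manifolds.

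Next I would splice in the two reductions. The First reduction lemma \eqref{nceejnefnknk} identifies $M/H$ with $C(\Omega)/\mathbb{R}\times \textup{Stief}_{n}(C(\Omega)/\mathbb{R})$ via $[(C,F)]_{H}\mapsto ([C],[F])$. Under the identification $G_{n}/H\cong \textup{Aff}(\mathbb{R}^{n})^{\dag}$, this diffeomorphism intertwines the induced $G_{n}/H$-action of Proposition \ref{nfekjkfekkjkfejk}(b) with the $\textup{Aff}(\mathbb{R}^{n})^{\dag}$-action displayed just before the Second reduction lemma; indeed, the latter action was \emph{defined} as the transport of the former. Hence it descends to a diffeomorphism $(M/H)/(G_{n}/H)\to \big(C(\Omega)/\mathbb{R}\times \textup{Stief}_{n}(C(\Omega)/\mathbb{R})\big)/\textup{Aff}(\mathbb{R}^{n})^{\dag}$, and composing with the Second reduction lemma \eqref{nceejnefnknkk} lands in $\textup{Graff}_{n}(C(\Omega)/\mathbb{R})$.

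The one substantive point, and the step I expect to require the most care, is checking that the composite of these three diffeomorphisms is exactly the explicit map $[(C,F)]\mapsto [C]+\textup{span}\{[F_{1}],\dots,[F_{n}]\}$ rather than some opaque abstract identification. I would verify this by chasing a representative: $[(C,F)]_{G_{n}}$ goes to $\big[[(C,F)]_{H}\big]_{G_{n}/H}$; the first reduction rewrites $[(C,F)]_{H}$ as $([C],([F_{1}],\dots,[F_{n}]))$; and the second reduction map sends the corresponding class to $[C]+\textup{span}\{[F_{1}],\dots,[F_{n}]\}$ by definition. Since each intermediate map is well defined on equivalence classes, the outcome is independent of the chosen representative, so the composite is the asserted map. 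The manifold structure on the target is the one provided by Proposition \ref{nvkednknkefn} with $V=C(\Omega)/\mathbb{R}$, completing the proof.
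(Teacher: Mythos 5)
Your proposal is correct and takes exactly the paper's route: the paper obtains the theorem by ``combining the two reductions,'' i.e.\ the freeness/properness lemma plus Proposition \ref{nfekjkfekkjkfejk}(c) and the two reduction lemmas, which is precisely the assembly you carry out. The details the paper leaves implicit (the equivariance of the first identification and the representative chase confirming the composite equals the explicit map $[(C,F)]\mapsto [C]+\textup{span}\{[F_{1}],\dots,[F_{n}]\}$) are exactly what you spell out, so nothing is missing.
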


	Recall that $e\textup{-Fam}_{n}(\Omega)$ denotes the set of $n$-dimensional minimal exponential families 
	defined on the set $\Omega$. The next result is an immediate consequence of Theorem \ref{nekwnkefnkn} and 
	our discussion in Section \ref{neknkrnkefnkw}. 

\begin{corollary}\label{nfeknkernkfn}
	The map 
	\begin{eqnarray*}
		\left\lbrace
		\begin{array}{rcl}
		e\textup{-Fam}_{n}(\Omega) &\to     & \textup{Graff}_{n}(C(\Omega)/\mathbb{R}), \\[0.5em]
		\mathcal{E}_{C,F}          &\mapsto &  [C]+\textup{span}\{[F_{1}],...,[F_{n}]\},
		\end{array}
		\right.
	\end{eqnarray*}
	is a bijection. 
\end{corollary}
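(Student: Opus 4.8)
The plan is to realize the map in the corollary as a composition of two bijections that have already been established, so that no new analytic work is required. First I would recall that the map $\Phi(C,F)=\mathcal{E}_{C,F}$ from $C(\Omega)\times \textup{Stief}_{n}(C(\Omega))^{\times}$ onto $e\textup{-Fam}_{n}(\Omega)$ is surjective by the very definition of a minimal exponential family, and that it descends to a bijection $\overline{\Phi}$ on the quotient by the equivalence relation $\sim$, where $(C,F)\sim(C',F')$ means $\mathcal{E}_{C,F}=\mathcal{E}_{C',F'}$. This descent is exactly the bijection recorded in Section \ref{neknkrnkefnkw}.

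The key identification is that $\sim$ coincides with the orbit equivalence of the $G_{n}$-action. This is precisely the content of Proposition \ref{neknkrnkefnk}: two pairs $(C,F)$ and $(C',F')$ are equivalent if and only if they lie in the same $G_{n}$-orbit. Consequently the quotient $\big(C(\Omega)\times \textup{Stief}_{n}(C(\Omega))^{\times}\big)/{\sim}$ coincides, as a set, with the orbit space $\big(C(\Omega)\times \textup{Stief}_{n}(C(\Omega))^{\times}\big)/G_{n}$, so that $\overline{\Phi}$ is a bijection from this orbit space onto $e\textup{-Fam}_{n}(\Omega)$.

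Next I would invoke Theorem \ref{nekwnkefnkn}, which supplies a bijection (in fact a diffeomorphism) $\Theta$ from $\big(C(\Omega)\times \textup{Stief}_{n}(C(\Omega))^{\times}\big)/G_{n}$ onto $\textup{Graff}_{n}(C(\Omega)/\mathbb{R})$ sending $[(C,F)]$ to $[C]+\textup{span}\{[F_{1}],...,[F_{n}]\}$. The composition $\Theta\circ\overline{\Phi}^{-1}$ is then a bijection from $e\textup{-Fam}_{n}(\Omega)$ onto $\textup{Graff}_{n}(C(\Omega)/\mathbb{R})$, and unwinding the definitions shows it carries $\mathcal{E}_{C,F}$ to $[C]+\textup{span}\{[F_{1}],...,[F_{n}]\}$, which is exactly the map in the statement.

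The only point requiring explicit care is the well-definedness of the corollary's map at the level of exponential families: the right-hand side $[C]+\textup{span}\{[F_{1}],...,[F_{n}]\}$ must not depend on the choice of representative $(C,F)$ with $\mathcal{E}_{C,F}=\mathcal{E}$. This is immediate from the two ingredients just used, since by Proposition \ref{neknkrnkefnk} any two such representatives differ by an element of $G_{n}$, and by Theorem \ref{nekwnkefnkn} the assignment $[(C,F)]\mapsto [C]+\textup{span}\{[F_{1}],...,[F_{n}]\}$ is constant on $G_{n}$-orbits. I do not expect a genuine obstacle here: all the substantive work has been carried out in establishing Proposition \ref{neknkrnkefnk} and Theorem \ref{nekwnkefnkn}, and the corollary is a purely formal consequence of chaining these bijections together.
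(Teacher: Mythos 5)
Your proposal is correct and is precisely the argument the paper intends: the corollary is stated there as an immediate consequence of Theorem \ref{nekwnkefnkn} together with the discussion in Section \ref{neknkrnkefnkw} (the descent of $\Phi$ to the quotient by $\sim$ and the identification of $\sim$ with $G_{n}$-orbit equivalence via Proposition \ref{neknkrnkefnk}). You have simply made explicit the chaining of bijections that the paper leaves to the reader.
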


\begin{example}
	The corollary implies that there exists only one minimal exponential family of dimension 
	$n$ defined on a sample space $\Omega$ of cardinality $n+1$. 
\end{example}

\section*{Acknowledgement}
	The first author was supported by the \textit{Coordena\c{c}\~ao de Aperfeiçoamento de Pessoal de N\'ivel Superior -- 
	Brasil (CAPES) -- Finance Code 001}. 

\begin{footnotesize}\bibliography{bibtex}\end{footnotesize}
\end{document}